\documentclass[12pt,a4paper]{article}
\usepackage{amsfonts,amssymb,amsthm,amsmath}
\usepackage{comment}
\usepackage{color}
\usepackage{epsfig}

\setlength{\topmargin}{-45pt}
\setlength{\oddsidemargin}{0cm}
\setlength{\evensidemargin}{0cm}
\setlength{\textheight}{24cm}
\setlength{\textwidth}{16cm}



\newtheorem{lemma}{Lemma}[section]
\newtheorem{remark}[lemma]{Remark}
\newtheorem{theo}[lemma]{Theorem}

\newcommand{\dis}{\displaystyle}
\newcommand{\epsi}{\varepsilon}

\newcommand{\s}{\sigma}
\newcommand{\bmf}{\mbox{\boldmath$f$}}
\newcommand{\bmb}{\mbox{\boldmath$b$}}
\newcommand{\bmu}{\mbox{\boldmath$u$}}
\newcommand{\bmv}{\mbox{\boldmath$v$}}
\newcommand{\bmw}{\mbox{\boldmath$w$}}

\newcommand{\bmxi}{\mbox{\boldmath$\xi$}}

\newcommand{\la}{\langle}
\newcommand{\ra}{\rangle}
\renewcommand{\tilde}{\widetilde}
\renewcommand{\bar}{\overline}

\newcommand{\grad}{\nabla}

\newcommand{\R}{\mathbb{R}}
\newcommand{\G}{\Gamma}
\newcommand{\pr}{\operatorname{pr}}
\newcommand{\BIGOP}[1]{\mathop{\mathchoice%
{\raise-0.22em\hbox{\huge $#1$}}%
{\raise-0.05em\hbox{\Large $#1$}}{\hbox{\large $#1$}}{#1}}}

\newcommand{\wV}{\widehat{V}}
\newcommand{\wN}{\widehat{N}}
\newcommand{\wH}{\widehat{H}}
\newcommand{\brho}{\boldsymbol{\rho}}
\newcommand{\bomu}{\boldsymbol{\mu}}

\newcommand{\sbmv}{\mbox{\scriptsize{$\bmv$}}}
\makeatletter

\@addtoreset{equation}{section}
\makeatother

\begin{document}

\begin{titlepage}
\title{Mean curvature flow with triple junctions in higher space dimensions}
\author{  Daniel Depner\footnote{Fakult\"at f\"ur Mathematik,  
Universit\"at Regensburg,
93040 Regensburg,
Germany, e-mail: {\sf daniel.depner@mathematik.uni-regensburg.de}}, 
Harald Garcke\footnote{Fakult\"at f\"ur Mathematik,  
Universit\"at Regensburg,
93040 Regensburg,
Germany, e-mail: {\sf harald.garcke@mathematik.uni-regensburg.de}}, and 
Yoshihito Kohsaka\footnote{Muroran Institute of Technology, 
27-1 Mizumoto-cho, Muroran 050-8585, Japan,
e-mail: {\sf kohsaka@mmm.muroran-it.ac.jp}}}
\date{}
\end{titlepage}
\maketitle
\begin{abstract}
We consider mean curvature flow of $n$-dimensional surface clusters. At $(n-1)$-dimensional 
triple junctions an angle condition is required which in the symmetric case reduces to the
well-known 120 degree angle condition. Using a novel parametrization of evolving surface clusters
and a new existence and regularity approach for parabolic equations on surface clusters we show
local well-posedness by a contraction argument in parabolic H\"older spaces. 
\end{abstract}
\noindent{\bf Key words:} Mean curvature flow, triple lines, 
local existence result, parabolic H\"older theory, free boundary problem.

\noindent{\bf AMS-Classification:}  53C44, 35K55, 35R35, 58J35. 







\section{Introduction}\label{sec:intro}

Motion by mean curvature for evolving hypersurfaces in $\mathbb{R}^{n+1}$ is given by
$$V=H \,,$$
where $V$ is the normal velocity and $H$ is the mean curvature of the
evolving surface. Mean curvature flow for closed surfaces is the
$L^2$-gradient flow of the area functional and many results for this
flow have been established over the last 30 years, see e.g. Huisken \cite{Hui84},
Gage and Hamilton \cite{GH86}, Ecker \cite{Eck04}, Giga \cite{G}, Mantegazza \cite{M} and the
references therein. 

Less is known for mean curvature flow of surfaces with boundaries. In
the simplest cases one either prescribes fixed Dirichlet boundary data
or one requires that surfaces meet a given surface with a $90$ degree 
angle. The last situation can be interpreted as the $L^2$-gradient
flow of area taking the side constraint into account that the boundary of
the surface has to lie on a given external surface. 
A setting where the surface is given as a graph was studied by Huisken \cite{Hui89}, who could also 
analyze the long time behaviour in the case where the evolving surface was given 
as the graph over a fixed domain. 
Local well-posedness
for general geometries was shown by Stahl \cite{stahl} who was also able to
formulate a continuation criterion. In addition he showed that
surfaces converge asymptotically to a half sphere before they vanish. 

\begin{figure}[h]
\begin{center} 
 \includegraphics[angle=-90,width=0.21\textwidth]{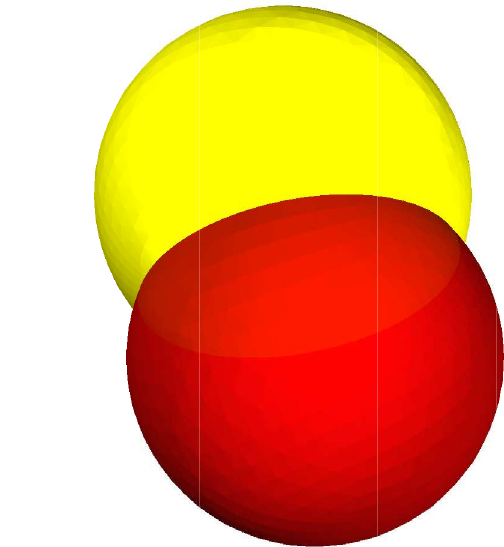} 
 \includegraphics[angle=-90,width=0.14\textwidth]{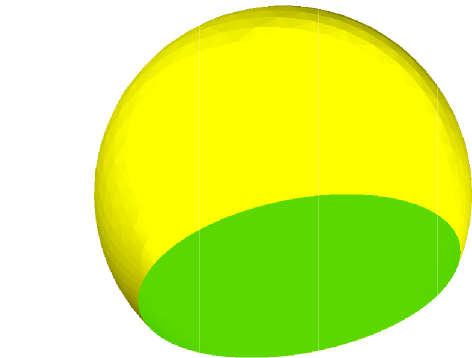}\qquad 
 \includegraphics[angle=-90,width=0.21\textwidth]{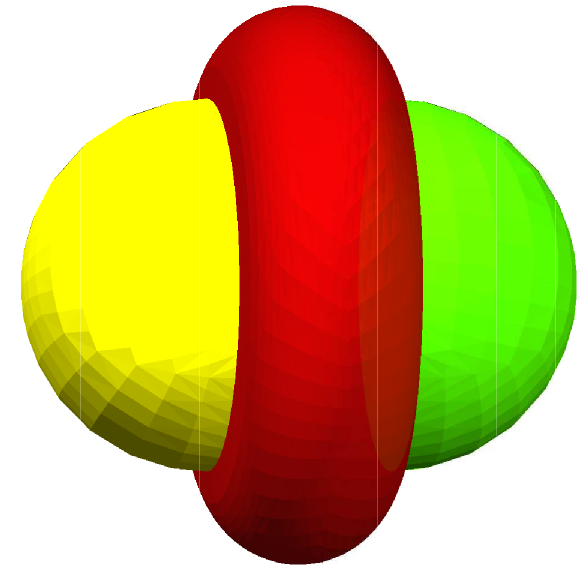} 
 \includegraphics[angle=-90,width=0.14\textwidth]{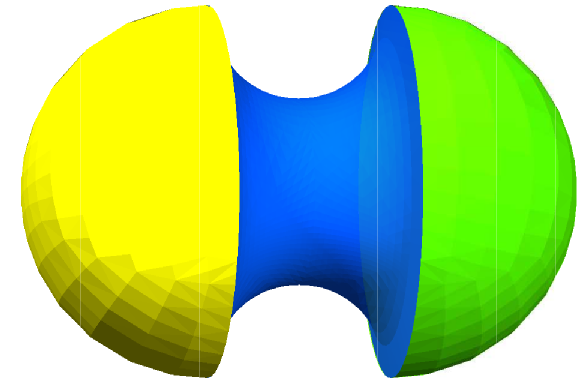} 
\caption{A surface cluster consisting of three hypersurfaces with boundary and one triple line on the left;
         and a surface cluster with four hypersurfaces, where the topology of the individual surfaces is 
         not the same for all on the right; taken from \cite{BGN}.}\label{figure1}
\end{center}
\end{figure}

Much less is known about the gradient flow dynamics for surface
clusters. In this case hypersurfaces $\Gamma^1,\dots,\Gamma^N$ in
$\mathbb{R}^{n+1}$ with boundaries
$\partial\Gamma^1,\dots,\partial\Gamma^N$ meet at $(n-1)$-dimensional
triple junctions, see e.g. Figure \ref{figure1}. Here, 
boundary conditions at the triple junction which can be derived
variationally have to be described. In what follows we briefly discuss
how to derive these boundary conditions. We define the weighted surface
free energy 
$$\mathcal{F}(\Gamma):=\sum^N_{i=1} \int\limits_{\Gamma^i} \gamma^i
d\mathcal{H}^n$$
for a given surface cluster $\Gamma=(\Gamma^1,\dots,\Gamma^N)$ (and
constant surface energy densities $\gamma_i>0$, $i=1,\ldots,N$) and consider a given smooth vector field 
$$\zeta :\mathbb{R}^{n+1}\to \mathbb{R}^{n+1}\,.$$
Then
we can define a variation $\Gamma(\varepsilon)$ of $\Gamma$ in the
direction $\zeta$ via
$$
\Gamma^i(\varepsilon)=\{x+\varepsilon\zeta(x)\mid
x\in\Gamma^i\}\,.$$
A transport theorem now gives 
$$\frac{d}{d\varepsilon} \int\limits_{\Gamma^i(\varepsilon)} 1
d\mathcal{H}^n = -\int\limits_{\Gamma^i(\varepsilon)} V^i H^i d\mathcal{H}^n 
 + \int\limits_{\partial\Gamma^i(\varepsilon)} v^i d\mathcal{H}^{n-1} \,,
$$ 
where $V^i$ is the normal
velocity and $H^i$ is the mean curvature of $\Gamma^i$. In
addition $v^i$ is the outer conormal velocity of the surface, i.e. we have $v^i= \la \zeta, \nu^i \ra$,
where $\nu^i$ is the outer unit conormal of $\partial\Gamma^i$ (for
details we refer to Garcke, Wieland \cite{GW06} and Depner, Garcke \cite{DG11}).

The first variation of $\mathcal{F}$ is now given by
\begin{equation*}
\frac{d}{d\varepsilon}\mathcal{F}(\Gamma(\varepsilon)) = \sum_i
\int_{\Gamma^i(\varepsilon)} \left( -\gamma^i V^i H^i \right) d\mathcal{H}^n+
\sum_i\int_{\partial\Gamma^i(\varepsilon)}\gamma^iv^i d\mathcal{H}^{n-1}
\end{equation*}
and hence a suitably weighted $L^2$-gradient flow is given by
\begin{eqnarray}
V^i = \beta^i H^i &&\mbox{on } \Gamma^i \mbox{ and} \label{MCF1}\\
\sum^3_{i=1}\gamma^i \nu^i=0 &&\mbox{at triple junctions}.
\label{MCF2}
\end{eqnarray}
We remark that the last condition reduces to a $120^\circ$ angle
condition in the case that all $\gamma_i$'s are equal.

Local well-posedness for curves in the plane has been shown by
Bronsard and Rei\-tich~\cite{BR} in a $C^{2+\alpha,1+\frac{\alpha}{2}}$ setting
using parabolic regularity theory and a fixed point argument (for a
typical solution see Figure \ref{curves}). 
Kinderlehrer and Liu \cite{KL} derived global existence of a planar network of grain boundaries 
driven by curvature close to an equilibrium.
Mantegazza, Novaga and Tortorelli \cite{MNT} were able to establish continuation
criteria and Schn\"urer et al. \cite{schnuerer_etal} and Bellettini and Novaga \cite{BN11} considered the asymptotic behaviour of
lens-shaped geometries. We remark that all of these results are restricted to the planar case. 

\begin{figure}[h]
\begin{center}
$\underset{\mbox{equal area}}{\includegraphics[angle=-90,width=0.35\textwidth]{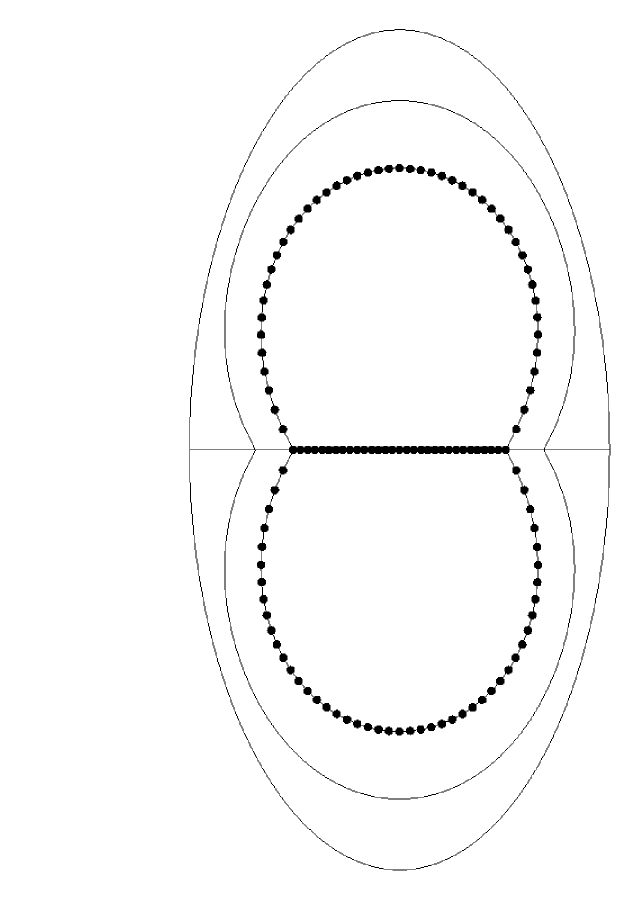}}$
\qquad 
$\underset{\mbox{non-equal area}}{\includegraphics[angle=-90,width=0.35\textwidth]{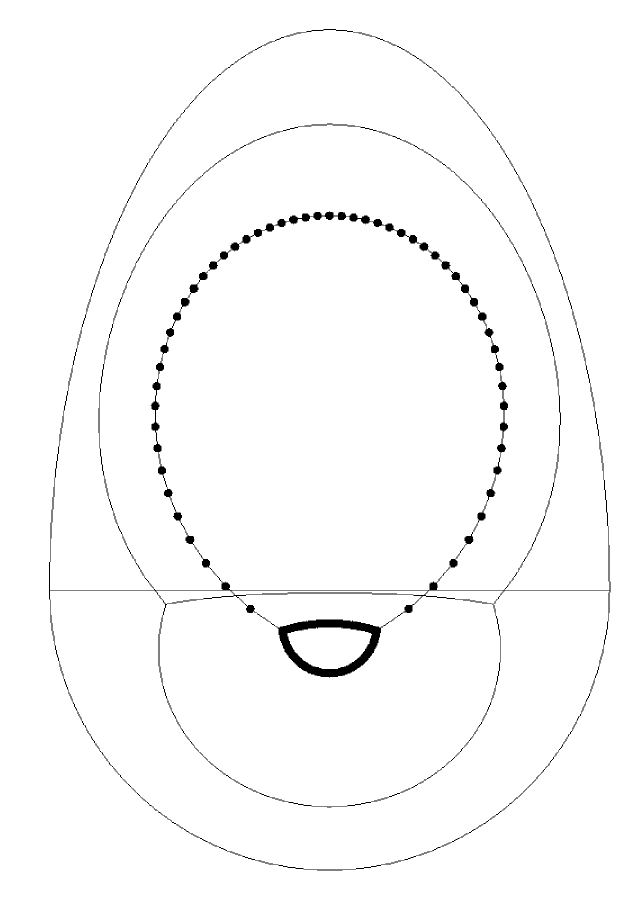}}$
\end{center}
\caption{Mean curvature flow of a double bubble in the plane, see
  \cite{BGN} for results in $\mathbb{R}^3$.}\label{curves}
\end{figure}

The higher dimensional situation is much more involved as the triple
junction now is at least one-dimensional and a tangential degree of
freedom arises at the triple junction. In addition, all mathematical descriptions of the
problem result in formulations which lead to a free boundary 
problem. Only recently, Freire~\cite{freire} was able to show local
well-posedness in the case of graphs. Of course most situations cannot
be represented as graphs. We use a new parametrization of surface
clusters introduced in Depner and Garcke~\cite{DG11} to state the
problem (\ref{MCF1}), (\ref{MCF2}) as a system of non-local, quasilinear
parabolic partial differential equations of second order. The PDEs are
defined on a surface cluster and are non-trivially coupled at the
junctions. To simplify the presentation, we will now stick to the situation of three 
surfaces meeting at one common triple junction. But we remark that 
generalizations of our approach to more general surface clusters are possible
as long as different triple junctions do not meet. Of course this can happen
for soap bubble clusters, see Taylor~\cite{T} and Morgan~\cite{Mor}.
In addition we want to remark that in the situation on the left in Figure \ref{figure1}
it is in principle possible to use one global parametrization for all
three evolving hypersurfaces. In this case we would get a system of PDEs on 
one reference configuration. Due to the topological restrictions this is not possible 
any more in the situation on the right in Figure \ref{figure1}. But since we only use local parametrizations, 
our method works also in this case. 

We hence look for families of evolving hypersurfaces 
$\G^i(t)\subset \mathbb{R}^{n+1}$ ($i=1,2,3$) governed by 
the mean curvature flow, which is weighted by $\beta^i>0$\,($i=1,2,3$). These hypersurfaces 
meet at their boundaries as follows
$$
\partial\G^1(t)=\partial\G^2(t)=\partial\G^3(t)\,(=:\Sigma(t)),
$$
which is an $(n-1)$-dimensional manifold. Also, the angles 
between hypersurfaces are prescribed. More precisely, we consider
\begin{align}
\left\{\begin{array}{l}
V^i=\beta^i H^i
\,\,\ \mbox{on}\,\,\ \G^i(t),\ t\in [0,T]\,\ (i=1,2,3), \\[0.1cm]
\angle(\G^i(t),\G^j(t))=\theta^k
\,\,\ \mbox{on}\,\ \Sigma(t),\ t\in [0,T], \\[0.1cm]
\qquad ((i,j,k)=(1,2,3),\,(2,3,1),\,(3,1,2)), \\[0.1cm]
\G^i(t)|_{t=0}=\G^i_0\,\ (i=1,2,3),
\end{array}\right.
\label{tripwithout}
\end{align}
where $\G^i_0$\,($i=1,2,3$) are given initial hypersurfaces, which meet at 
their boundary, i.e. $\partial\G_0^1=\partial\G_0^2=\partial\G_0^3\,
(=:\Sigma_0)$, and fulfill the angle conditions as above. 
Here, $V^i$ and $H^i$ are the normal velocity and mean curvature of $\G^i(t)$, 
respectively. 

In (\ref{tripwithout}), $\theta^1, \theta^2$ and $\theta^3$ are given 
contact angles with $0 < \theta^i < \pi$, which fulfill 
$\theta^1 + \theta^2 + \theta^3 = 2 \pi$ and Young's law
\begin{equation} \label{younglaw}
 \frac{\sin \theta^1}{\gamma^1} = \frac{\sin \theta^2}{\gamma^2} 
 = \frac{\sin \theta^3}{\gamma^3} \;.
\end{equation}
Let $\nu^i(\cdot,t)$\,($i=1,2,3$) be the outer conormals at $\partial \G^i(t)$. 
Then, introducing the angle conditions as in (\ref{tripwithout}), one can show 
that \eqref{younglaw} is equivalent to
\begin{equation}
\gamma^1\nu^1(\cdot,t)+\gamma^2\nu^2(\cdot,t)+\gamma^3\nu^3(\cdot,t)=0
\,\,\ \mbox{on}\,\,\ \Sigma(t) ,
\label{younglaw2}
\end{equation}
which is the condition \eqref{MCF2} stated above.  
To choose appropriate normals $N^i(\cdot,t)$ of $\G^i(t)$, we observe that 
due to the appearance of a triple junction $\Sigma(t)$ 
the six vectors $N^i(\cdot,t)$, $\nu^i(\cdot,t)$, $i=1,2,3$ on 
$\Sigma(t)$ all lie in a two-dimensional space, namely the orthogonal 
complement $\left( T_\s \Sigma(t) \right)^\perp$ of the triple junction.
In this two-dimensional space we choose an oriented basis and a corresponding 
counterclockwise rotation $R$ around $90$ degree. Then we set 
\begin{align*}
 N^i(\cdot,t):=R \nu^i(\cdot,t) \quad \mbox{ on } \; \Sigma(t)
\end{align*}
and extend these normals by continuity to all of $\Gamma^i(t)$.
Then we can write instead of (\ref{younglaw2})

\begin{equation}
\gamma^1N^1(\cdot,t)+\gamma^2N^2(\cdot,t)+\gamma^3N^3(\cdot,t)=0
\,\,\ \mbox{on}\,\,\ \Sigma(t).
\label{younglaw3}
\end{equation}
In the following the angle conditions at the triple line are written as 
\begin{align} \label{younglaw4}
 &\la N^i(\cdot,t),N^j(\cdot,t)\ra = \cos \theta^k 
\end{align}
on $\Sigma(t)$ for $(i,j,k) = (1,2,3)$, $(2,3,1)$, and $(3,1,2)$. 
Here and hereafter, $\la\,\cdot\,,\,\cdot\,\ra$ means the inner product 
in ${\mathbb R}^{n+1}$. 

We are able to show the following result (for a precise formulation of the
result we refer to Section \ref{sec:locexist}):

\bigskip\noindent
\textbf{Main result.} \\
\textit{Let $(\Gamma^1_0,\Gamma^2_0,\Gamma^3_0)$ be a $C^{2+\alpha}$ surface 
cluster with a $C^{2+\alpha}$ triple junction curve $\gamma$.
We assume the compatibility conditions
\begin{itemize}
\item[-] 
$(\Gamma^1_0,\Gamma^2_0,\Gamma^3_0)$ fulfill the angle conditions,
\item[-] 
$\gamma^1 \beta^1 H^1_0 + \gamma^2 \beta^2 H^2_0 + \gamma^3 \beta^3 H^3_0 = 0$ 
on the triple line $\partial \Gamma^1_0 = \partial \Gamma^2_0 
= \partial \Gamma^3_0$.
\end{itemize}
Then there exists a local
$C^{2+\alpha,1+\frac{\alpha}{2}}$ solution of 
\begin{eqnarray*}
&V^i=H^i\,\,\ +\,\,\ \mbox{angle conditions},
\end{eqnarray*}
with initial data $(\Gamma^1_0,\Gamma^2_0,\Gamma^3_0)$.
}

\bigskip
The idea of the proof is as follows:
First we study the linearized problem around a reference configuration with energy methods (this is
non-trivial as the system is defined on a surface cluster). 
Then we show local $C^{2+\alpha,1+\frac{\alpha}{2}}$-regularity of the solutions to the linearized 
problem. In order to apply classical regularity theory close to the triple junction, we parametrize 
the cluster locally over one fixed reference domain and check the Lopatinskii-Shapiro condition for the 
resulting spatially localized system on the flat reference domain directly and for convenience with an energy argument. 
Finally we use a fixed point argument in $C^{2+\alpha,1+\frac{\alpha}{2}}$ which is non-trivial as the
overall system is non-local. In this context ideas of Baconneau and Lunardi \cite{BL} are useful.

We remark that we do not need the initial surfaces $\Gamma^i_0$ to be of class $C^{3+\alpha}$ as in 
\cite{BL} since we linearize around smooth enough reference hypersurfaces, which are 
close enough to $\Gamma^i_0$ in the $C^{2+\alpha}$-norm.  

We also remark that the overall problem has a structure similar as free boundary problems. 
This is due to the fact that at the triple junction a motion of the surface cluster in 
conormal direction is necessary. When formulating the evolution on a fixed reference 
configuration, we need to take care of the conormal velocity which results in a highly 
nonlinear nonlocal evolution problem similar as in several free boundary problems, 
see e.g. Escher and Simonett \cite{ES97} or Baconneau and Lunardi \cite{BL}.
In our context an additional difficulty arises due to the fact that three surfaces
who all have a conormal velocity meet at the triple junction. The connection to 
free boundary problems is more apparent in the graph case which has been 
considered by Freire \cite{freire}.


\section{PDE formulation}\label{sec:param}

\subsection{Parametrization of surface clusters} \label{subsec:paramsurfclus}

Let us describe $\G^i(t)$ with the help of functions 
$\rho^i : \G^i_\ast \times [0,T] \to \mathbb{R}$ 
as graphs over some fixed compact reference hypersurfaces $\G^i_\ast$\,($i=1,2,3$) 
of class $C^{3+\alpha}$ for some $0<\alpha<1$
with boundary $\partial\Gamma^i_\ast$. These are supposed to have 
a common boundary 
\begin{equation} \label{fixcommbound}
\partial \G^1_\ast = \partial \G^2_\ast = \partial \G^3_\ast\,
(=:\Sigma_\ast)
\end{equation}
and fulfill the angle conditions from \eqref{tripwithout}. As above, 
we introduce notation such that the outer conormals 
$\nu^i_\ast$ at $\partial \G^i_\ast$ fulfill
$$
 \gamma^1 \nu^1_\ast + \gamma^2 \nu^2_\ast + \gamma^3 \nu^3_\ast = 0
 \quad \mbox{on } \; \Sigma_\ast \;,
$$
and the normals $N^i_\ast$ of $\G^i_\ast$ are chosen such that
\begin{align} \label{sum_N}
 \gamma^1 N^1_\ast + \gamma^2 N^2_\ast + \gamma^3 N^3_\ast = 0
 \quad \mbox{on } \; \Sigma_\ast \;.
\end{align}
Note that we do not assume $\G^i_\ast$ to be a stationary solution of 
\eqref{tripwithout}, that is the mean curvature of $\G^i_\ast$ can 
be arbitrary. 

Let $F^i\,:\,\Omega^i \rightarrow \mathbb{R}^{n+1}$ be 
a local parametrization with $F^i(\Omega^i) \subset \G_*^i$ where 
$\Omega^i$ is either an open subset of $\mathbb{R}^n$ or 
$B^+(0) = \{x \in \R^n \,|\, |x| < 1r \,, \; x_n \geq 0\}$
in the case that we parametrize around a boundary point. 
For $\sigma\in\G_*^i$, 
we set $F^{-1}(\sigma)=(x_1(\sigma),\ldots,x_n(\sigma))\in\mathbb{R}^n$. 
Here and hereafter, for simplicity, we use the notation
$$
w(\sigma)=w(x_1,\ldots,x_n) \quad (\sigma\in\G_*^i) \,,
$$
i.e. we omit the parametrization. 
In particular, we set $\partial_l w:=\partial_{x_l}(w \circ F)$. 

To parametrize a hypersurface close to $\Gamma_\ast^i$, 
we define the mapping through
\begin{align} \label{Psidef}
 \Psi^i\,:\, \G^i_\ast \times (-\epsi,\epsi) \times (-\delta,\delta) 
 \rightarrow &\ \mathbb{R}^{n+1} \;,\\  
 (\sigma,w,r) 
 \mapsto &\ \Psi^i(\sigma,w,r) := \sigma + w \, N^i_\ast(\sigma) 
   + r \, \tau^i_\ast(\sigma) \;, \nonumber
\end{align}
where $\tau^i_\ast$ is a tangential vector field on $\G^i_\ast$ with support 
in a neighbourhood of $\partial \G^i_\ast$, which equals the conormal 
$\nu^i_\ast$ at $\partial \G^i_\ast$. The index $i$ has range $1,2,3$. 

For $i=1,2,3$ and functions 
$$
\rho^i\,:\,\G^i_\ast \times [0,T] \rightarrow (-\varepsilon,\varepsilon) \,, 
\quad \mu^i\,:\,\Sigma_\ast \times [0,T] \rightarrow (-\delta,\delta) 
$$
we define the mappings $\Phi^i=\Phi^i_{\rho^i, \mu^i}$ (we often omit 
the subscript $(\rho^i,\mu^i$) for shortness) through
\begin{align*}
 \Phi^i : \G^i_\ast \times [0,T] \rightarrow&\ \R^{n+1} \;, \quad
 \Phi^i(\sigma,t) := \Psi^i(\sigma,\rho^i(\sigma,t),\mu^i(\pr^i(\sigma),t)) \,.
\end{align*}
Herein $\pr^i : \G^i_\ast \to \partial \G^i_\ast$ is 
defined such that $\pr^i(\sigma) \in \partial \Gamma^i_\ast$ is the point on 
$\partial \Gamma^i_\ast$ with shortest distance on $\Gamma^i_\ast$ to $\sigma$.
We remark here that $\pr^i$ is well-defined 
and smooth close to $\partial \Gamma^i_\ast$.
Note that we need this mapping just in a (small) neighbourhood of 
$\partial \G^i_\ast$, because it is used in the product 
$\mu^i(\pr^i(\sigma),t)\, \tau^i_\ast(\sigma)$, where the second term 
is zero outside a (small) neighbourhood of $\partial \G^i_\ast$. 
For small $\varepsilon, \delta > 0$ and fixed $t$ we set
$$
 (\Phi^i)_t\,:\,\G^i_\ast \rightarrow \mathbb{R}^{n+1},\,\,\quad 
 (\Phi^i)_t(\sigma) := \Phi^i(\sigma,t) \,,
$$
and finally we define new hypersurfaces through
\begin{equation} \label{defnewsurf}
 \G_{\rho^i, \mu^i}(t) := \mbox{image} ((\Phi^i)_t) \,.
\end{equation}
We observe that for $\rho^i \equiv 0$ and $\mu^i \equiv 0$ the resulting 
surface is simply $\G_{\rho^i \equiv 0, \mu^i \equiv 0}(t) = \G^i_\ast$ 
for every $t$. 

\begin{remark} \label{rem:localdiff}
 We remark that for $\rho^i \in C^2(\G^i_\ast)$ and $\mu^i \in C^2(\Sigma_\ast)$ small 
 enough in the $C^1(\G^i_\ast)$- resp. $C^1(\Sigma_\ast)$-norm the mapping $(\Phi^i)_t$
 is a local $C^2$-diffeomorphism onto its image. 
 
 In fact, omitting the time variable $t$ and the index $i$ for the moment, choosing a local 
 parametrization and using the above abbreviations we calculate
 \begin{align*}
  \partial_l \Phi &= \partial_l q + \partial_l \rho \, N_\ast + \rho \partial_l N_\ast 
                      + \partial_l (\mu \circ \pr) \, \tau_\ast + (\mu \circ \pr) \partial_l \tau_\ast \,.
 \end{align*}
 A rather lengthy, but elementary calculation for $g_{lk} = \langle \partial_l \Phi , \partial_k \Phi \rangle$ gives
 \begin{align*}
  g_{lk} &= (g_{\ast})_{lk} + P_{lk}(\rho, (\mu \circ \pr), \nabla \rho, \nabla (\mu \circ \pr)) \,,
 \end{align*}
 where $P_{lk}$ is a polynomial with $P_{lk}(0)=0$. With the help of the Leibniz formula for 
 the determinant we can then derive 
 \begin{align*}
  g = \det \left( (g_{lk})_{l,k=1,\ldots,n} \right)
    = g_\ast + P(\rho, (\mu \circ \pr), \nabla \rho, \nabla (\mu \circ \pr)) \,,
 \end{align*}
 where $P$ is a polynomial with $P(0)=0$. Since $g_\ast > 0$ we conclude that for $\rho$
 and $\mu$ small enough in the $C^1$-norms also $g$ is positive. Together with the fact that 
 $(g_{lk})_{l,k=1,\ldots,n}$ is positive semi-definite due to 
 \begin{align} \label{equ:possemidef}
  \sum_{l,k=1}^n \xi_l g_{lk} \xi_k = \sum_{l,k=1}^n \xi_l \langle \partial_l \Phi , \partial_k \Phi \xi_k \rangle
      = \left| \sum_{l=1}^n \xi_l \partial_l \Phi \right|^2 \geq 0 \; \mbox{ for all } \, \xi \in \mathbb{R}^n \,,
 \end{align}
 we conclude the property that $(g_{lk})_{l,k=1,\ldots,n}$ is even positive definite. 
 Hence we obtain a strict inequality in \eqref{equ:possemidef}, whenever $\xi \neq 0$
 and we conclude that $\partial_1 \Phi,\ldots,\partial_n \Phi$
 are linearly independent, which means that the differential $d \Phi(\sigma)$ has full rank. 
 
 Finally with the help of the inverse function theorem we conclude  that $(\Phi^i)_t$ is a local
 diffeomorphism and the image $\G_i(t)$ has metric tensor $(g_{lk})_{l,k=1,\ldots,n}$. 
\end{remark}

In the definition of $\Psi^i$ we allow at the triple junction for a movement in normal and tangential direction, 
and hence there are enough degrees of freedom to 
formulate the condition, that the hypersurfaces $\G_i(t)$ meet in 
one triple junction $\Sigma(t)$ at their boundary, through
\begin{equation} \label{meetcond}
 \Phi^1(\sigma,t) = \Phi^2(\sigma,t) = \Phi^3(\sigma,t) \quad 
 \mbox{for } \; \sigma \in \Sigma_\ast\;,\ t \geq 0 \,.
\end{equation}
We rewrite these equations in the following lemma, which was shown in 
Depner and Garcke \cite{DG11}.

\begin{lemma} \label{lem:equmeetcond}
Equivalent to the equations \eqref{meetcond} are the following conditions
 \begin{align} \label{fourcond}
 \left\{ \begin{array}{cll}
  (i)\,& \gamma^1 \rho^1 + \gamma^2 \rho^2 + \gamma^3 \rho^3 = 0 
       & \mbox{on } \, \Sigma_\ast \,, \\[0.1cm]
  (ii) & \mu^i = \dfrac{1}{s^i} \left( c^j \rho^j - c^k \rho^k \right) 
       & \mbox{on } \, \Sigma_\ast \,. \rule{0cm}{0.5cm}
 \end{array}\right.
 \end{align}
for $(i,j,k) = (1,2,3), (2,3,1)$ and $(3,1,2)$, and where 
$s^i = \sin \theta^i$ and $c^i = \cos \theta^i$. 
\end{lemma}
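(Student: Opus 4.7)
On $\Sigma_\ast$ the retraction $\pr^i$ is the identity and $\tau^i_\ast$ equals the outer conormal $\nu^i_\ast$, so
$\Phi^i(\sigma,t) - \sigma = \rho^i(\sigma,t)\,N^i_\ast(\sigma) + \mu^i(\sigma,t)\,\nu^i_\ast(\sigma)$
for every $\sigma\in\Sigma_\ast$. All six vectors $N^i_\ast,\nu^i_\ast$ ($i=1,2,3$) lie in the two-dimensional plane $(T_\sigma\Sigma_\ast)^\perp$, and for each $i$ the pair $\{N^i_\ast,\nu^i_\ast\}$ is an orthonormal basis of that plane, since $N^i_\ast = R\nu^i_\ast$ and $R$ is a $90^\circ$ rotation. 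Hence the meeting condition \eqref{meetcond} is equivalent to two planar vector identities, i.e.\ to four scalar equations in the six unknowns $\rho^i,\mu^i$. My plan is to decompose these equations in a fixed basis using the angle conditions and then match the resulting linear system against (i)+(ii) via Young's law \eqref{younglaw} and the angle sum $\theta^1+\theta^2+\theta^3 = 2\pi$.

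Concretely, I would take the inner product of $\rho^i N^i_\ast + \mu^i\nu^i_\ast = \rho^j N^j_\ast + \mu^j\nu^j_\ast$ against each $N^\ell_\ast$. Using $\langle N^i_\ast, N^j_\ast\rangle = \cos\theta^k$, the consequence $\langle \nu^i_\ast,\nu^j_\ast\rangle = \cos\theta^k$ (from the orthogonality of $R$), and $\langle N^i_\ast,\nu^j_\ast\rangle = \pm\sin\theta^k$ (since the orthonormal bases $\{N^i_\ast,\nu^i_\ast\}$ and $\{N^j_\ast,\nu^j_\ast\}$ are related by a rotation of angle $\pm\theta^k$, with the sign fixed by the chosen orientation), the meeting condition becomes three identities of the form
\[
 \rho^i = \rho^j\cos\theta^k \pm \mu^j\sin\theta^k, \qquad (i,j,k) \text{ cyclic},
\]
together with three companion identities obtained by exchanging $j$ and $k$. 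Summing these weighted by $\gamma^i$ and inserting $\gamma^i = \lambda\sin\theta^i$ from Young's law, the $\mu$-contributions cancel thanks to $\sin(\theta^j+\theta^k) = -\sin\theta^i$, which yields (i). Solving the same three identities for $\mu^i$ and using (i) together with $\cos(\theta^j+\theta^k) = \cos\theta^i$ to rewrite the right-hand side then produces the explicit formula (ii).

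For the converse I would substitute (i) and (ii) back into $\rho^i N^i_\ast + \mu^i\nu^i_\ast$, expand in the basis $\{N^1_\ast,\nu^1_\ast\}$, and verify that the result is independent of $i$; condition (i) is precisely the relation needed to cancel the residual $\rho^1$-terms. Alternatively, since both the meeting condition and the system (i)+(ii) cut out four linearly independent conditions on $(\rho^i,\mu^i)\in\mathbb{R}^6$, a dimension count reduces the equivalence to one of the implications.

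The main obstacle is pure bookkeeping: tracking the signs in $\langle N^i_\ast,\nu^j_\ast\rangle$ induced by the rotation $R$, and systematically invoking the addition formulas $\cos(\theta^j+\theta^k)=\cos\theta^i$ and $\sin(\theta^j+\theta^k)=-\sin\theta^i$ together with the proportionality $\gamma^i\propto\sin\theta^i$ to make the $\mu$-terms cancel in (i) and to convert between the two equivalent expressions for $\mu^i$. No conceptual difficulty arises; the content is a careful planar trigonometric computation on $(T_\sigma\Sigma_\ast)^\perp$.
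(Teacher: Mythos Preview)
Your approach is correct and essentially the same as the paper's: the lemma is not proved in the paper itself but deferred to \cite{DG11}, and the sketch left (commented out) in the source proceeds exactly as you do---write \eqref{meetcond} on $\Sigma_\ast$ as $\rho^i N^i_\ast+\mu^i\nu^i_\ast=\rho^j N^j_\ast+\mu^j\nu^j_\ast$, project onto the orthonormal basis of $(T_\sigma\Sigma_\ast)^\perp$ (the paper projects onto $\nu^i_\ast$ rather than $N^\ell_\ast$, which is immaterial), read off $\langle N^j_\ast,\nu^i_\ast\rangle=-s^k$ and $\langle\nu^j_\ast,\nu^i_\ast\rangle=c^k$, and solve the resulting linear system using $\theta^1+\theta^2+\theta^3=2\pi$ and Young's law. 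Your identification of the sign bookkeeping in $\langle N^i_\ast,\nu^j_\ast\rangle$ as the only delicate point is accurate.
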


With the notation $\bomu = (\mu^1,\mu^2,\mu^3)$, 
$\brho = (\rho^1,\rho^2,\rho^3)$ and the matrix 
\begin{align*}
 {\cal T} = 
 \left( \begin{array}{ccc}
         0 & \dfrac{c^2}{s^1} & -\dfrac{c^3}{s^1} \\[0.25cm]
         -\dfrac{c^1}{s^2} & 0 & \dfrac{c^3}{s^2} \\[0.25cm]
         \dfrac{c^1}{s^3} & -\dfrac{c^2}{s^3} & 0
        \end{array}
 \right)
\end{align*}
we can state the linear dependence from $(ii)$ of \eqref{fourcond} as 
\begin{align} \label{equ:lindep}
 \bomu = {\cal T} \brho \quad \; \mbox{ on } \; \Sigma_\ast \,. 
\end{align}

\subsection{The nonlocal, nonlinear parabolic boundary value problem} \label{subsec:fullprob}

From now on, we always assume condition (\ref{meetcond}). 
We introduce the notation $\widehat{N}^i(\sigma,t)$, $\widehat{V}^i(\sigma,t)$ 
and $\widehat{H}^i(\sigma,t)$ which are the normal, the normal velocity and 
the mean curvature of $\Gamma^i(t) := \Gamma_{\rho^i,\mu^i}(t)$ at the point 
$\Phi^i(\sigma,t)$. Then we write equation (\ref{tripwithout}) over the fixed 
hypersurfaces $\G^1_\ast$, $\G^2_\ast$, and $\G^3_\ast$ as follows:
\begin{align} \label{triplefix}
 \left\{ \begin{array}{lll}
  \wV^i(\sigma,t) = \beta^i \wH^i(\sigma,t) 
  & \mbox{on } \; \G^i_\ast \;, \, t \in [0,T] \;, \; i=1,2,3 \;, \\
  \la \wN^1(\sigma,t), \wN^2(\sigma,t) \ra = \cos \theta^3 
  & \mbox{on } \; \Sigma_\ast \;, \, t \in [0,T] \;, \\
  \la \wN^2(\sigma,t), \wN^3(\sigma,t) \ra = \cos \theta^1 
  & \mbox{on } \; \Sigma_\ast \;, \, t \in [0,T] \;, \\
  (\rho^i(\sigma,0),\mu^i(\sigma,0)) = (\rho^i_0,\mu^i_0) 
  & \mbox{on } \; \G^i_\ast \times \Sigma_\ast \;, \; i=1,2,3 \;,
 \end{array} \right.
\end{align}
where we assume that the initial surfaces $\G^i_0$ from (\ref{tripwithout}) are 
given as 
$$
 \G^i_0 = \mbox{image}\{\sigma \mapsto 
 \Psi^i(\sigma,\rho^i_0(\sigma),\mu^i_0(\pr^i(\sigma))) 
 \;|\; \sigma \in \G^i_\ast\} \,.
$$
Herein we assume $\rho^i_0 \in C^{2+\alpha}(\G^i_\ast)$ with 
$\|\rho^i_0\|_{C^{2+\alpha}} \leq \varepsilon$ for some $\varepsilon > 0$,  
$\mu^i_0 \in C^{2+\alpha}(\Sigma_\ast)$ given by 
$\bomu_0 = \mathcal{T} \brho_0$ on $\Sigma_\ast$ 
and in addition the angle conditions from \eqref{tripwithout} for $\G^i_0$ 
shall be fulfilled. Furthermore, we assume that
\begin{equation}
\gamma^1 \beta^1 H_0^1+\gamma^2 \beta^2 H_0^2+\gamma^3 \beta^3 H_0^3 = 0\,\,\ 
\mbox{on}\,\ \Sigma_*,
\label{sum_H0}
\end{equation}
where $H_0^i$ is the mean curvature of $\G^i_0$. Note that equation 
\eqref{sum_H0} follows for smooth solutions from the first line in problem \eqref{triplefix} 
at $t=0$ on $\Sigma_\ast$, since for points on the triple junction 
we can write for the normal velocity $\wV^i = \la c'(0),\wN^i \ra$ with 
one curve $c:[0,t_0] \to \mathbb{R}^{n+1}$ on $\Sigma_\ast$ with 
$c(t) \in \Sigma(t)$ and use equation \eqref{younglaw3} for 
$\G^i_0$ which follows from the angle conditions. 

\begin{remark} \label{rem:initialnearref}
 The requirement that the $C^{2+\alpha}$-norm of the initial values 
$\rho^i_0$ is small implies that the initial hypersurfaces $\G^i_0$ are 
$C^{2+\alpha}$-close to the reference hypersurfaces $\G^i_\ast$, which 
are of class $C^{3+\alpha}$.
In order to make this compatible to condition \eqref{sum_H0}, there are 
two possibilities.

On the one hand we could start with initial hypersurfaces $\G^i_0$, 
which fulfill \eqref{sum_H0} and then choose hypersurfaces $\G^i_\ast$ 
of class $C^{3+\alpha}$, which are close enough to $\G^i_0$.
This would imply that condition \eqref{sum_H0} is almost fulfilled 
for $\G^i_\ast$ in the sense that 
$\left| \sum_{i=1}^3 \gamma^i \beta^i H^i_\ast \right|$ is small.

On the other hand we could additionally require condition \eqref{sum_H0} 
for the reference hypersurfaces $\G^i_\ast$. In this way the above approach 
would always work in the sense that there are hypersurfaces 
$\G^i_0$ given by $\rho^i_0$, such that $\|\rho^i_0\|_{C^{2+\alpha}}$ is 
small and \eqref{sum_H0} holds.
\end{remark}

Due to the condition $\theta^1 + \theta^2 + \theta^3 = 2 \pi$ and the fact 
that the surfaces all meet at a triple junction at their boundary, which follows 
from (\ref{meetcond}), the third angle condition 
\begin{equation} \label{triplewithoutfix}
  \begin{array}{rcll}
   \la \wN^2(\sigma,t), \wN^3(\sigma,t) \ra 
   = \cos \theta^2 
   \quad \mbox{on } \; \Sigma_\ast \;,\ t \in [0,T] \;, 
  \end{array}
\end{equation}
is automatically fulfilled and we omit it from now on. The equations 
(\ref{triplefix}) give a second order system of partial differential 
equations for the functions $(\rho^1,\mu^1,\rho^2,\mu^2,\rho^3,\mu^3)$.

More precisely, we can obtain the following representation for the equation. 
For the normal velocities we calculate
\begin{align*}
 \wV^i (\sigma,t) 
 &= \la \wN^i(\sigma,t), \partial_t \Phi^i(\sigma,t) \ra \\
 &= \la \wN^i(\sigma,t), 
     \partial_t \bigl\{ \sigma + \rho^i(\sigma,t) \, N^i_\ast(\sigma) 
     + \mu^i(\pr^i(\sigma),t) \, \tau^i_\ast(\sigma) 
     \bigr\} \ra \\
 &= \la \wN^i(\sigma,t), N^i_\ast(\sigma)\ra\, 
     \partial_t \rho^i(\sigma,t) 
     + \la \wN^i(\sigma,t), \tau^i_\ast(\sigma) \ra\, 
     \partial_t \mu^i(\pr^i(\sigma),t) \;. 
\end{align*}
We remark that there is a function $\tilde{N}^i$ such that 
$$
\wN^i(\sigma,t)
:=\tilde{N}^i(\sigma,\rho^i(\sigma,t),\mu^i(\pr^i(\sigma),t),
\nabla\rho^i(\sigma,t),\bar{\nabla}\mu^i(\pr^i(\sigma),t))
$$
is the unit normal vector field of $\Gamma^i(t)$, where 
$\nabla\rho^i$ is the gradient of $\rho^i$ on the hypersurfaces 
$\Gamma^i_\ast$, which is denoted in a local chart by 
$\nabla_j\rho^i=\partial_j\rho^i$\,($j=1,\ldots,n$), 
and $\bar{\nabla}\mu^i$ is the $(n-1)$-dimensional gradient of $\mu^i$ 
on a surface $\Sigma_\ast$. 
A formula for $\tilde{N}^i$ can be given with the help of 
a local chart through a normalized cross product of the tangential vectors 
$\partial_l \Phi^i$. Therefore $\tilde{N}^i$ is a nonlocal operator, 
since in its formula we find an expression 
$\mu^i(\pr^i(\sigma),t) \tau^i_\ast(\sigma)$ so that 
we do not only need $\brho$, $\bomu$ and its derivatives at the point $\sigma$ but also the point 
$\pr(\sigma) \in \partial \Gamma^i_\ast$ in order to calculate $\tilde{N}^i$. 

Since 
$$
(g^i)_{jk}=\la \partial_j\Phi^i, \partial_k\Phi^i \ra, \quad
(h^i)_{jk}=\la \wN^i, \partial_j\partial_k\Phi^i \ra,
$$
the mean curvature $\wH^i=(g^i)^{jk}(h^i)_{jk}$ is 
represented as
$$
\wH^i(\sigma,t) := \tilde{H}^i(\sigma,\rho^i(\sigma,t),\mu^i(\pr^i(\sigma),t),
\nabla\rho^i(\sigma,t),\bar{\nabla}\mu^i(\pr^i(\sigma),t),
\nabla^2\rho^i(\sigma,t),\bar{\nabla}^2\mu^i(\pr^i(\sigma),t)),
$$
where $\nabla^2\rho^i$ is the Hessian of $\rho^i$ on hypersurfaces 
$\Gamma_\ast^i$ defined in a local chart by 
\[ \nabla^2_{j_1j_2}\rho^i
=\nabla_{j_1}\nabla_{j_2}\rho^i=\partial_{j_1}\partial_{j_2}\rho^i
-\Gamma_{j_1j_2}^k\partial_k \rho^i \quad (j_1,j_2 = 1,\ldots,n), \]
where $\Gamma_{j_1j_2}^k$ are the Christoffel symbols for $\Gamma_*^i$ and 
we used the sum convention for the last term. 
The expression $\bar{\nabla}^2\mu^i$ denotes the Hessian of $\mu^i$ on 
the $(n-1)$-dimensional surface $\Sigma_\ast$. Note that the coefficients in front of 
the term $\bar{\nabla}_j\bar{\nabla}_k\,\mu^i$ in $\wH^i$ are given by
$$
(g^i)^{jk} \la \tau_*^i, \wN^i \ra.
$$
Thus the mean curvature flow equations can be reformulated as
\begin{equation}
\partial_t \rho^i
=a^i(\sigma,\rho^i,\mu^i)H^i(\sigma,\rho^i,\mu^i) 
+a^i_{\dag}(\sigma,\rho^i,\mu^i)\mu^i_t,
\label{rho_eq}
\end{equation}
where $H^i(\sigma,\rho^i,\mu^i):=\tilde{H}^i(\sigma,\rho^i,\mu^i,\nabla\rho^i,
\bar{\nabla}\mu^i, \nabla^2\rho^i, \bar{\nabla}^2\mu^i)$ and 
\begin{align*}
&a^i(\sigma,\rho^i,\mu^i)
:=\tilde{a}^i(\sigma,\rho^i, \mu^i, \nabla \rho^i, \bar{\nabla} \mu^i)
=\frac{\beta^i}{\la N_*^i(\sigma), 
\tilde{N}^i(\sigma, \rho^i, \mu^i, \nabla\rho^i, \bar{\nabla}\mu^i) \ra}, \\
&a^i_{\dag}(\sigma,\rho^i,\mu^i)
:=\tilde{a}_{\dag}^i(\sigma,\rho^i,\mu^i,\nabla\rho^i,\bar{\nabla}\mu^i)
=- \frac{\la \tau_*^i(\sigma), 
\tilde{N}^i(\sigma, \rho^i, \mu^i, \nabla\rho^i, \bar{\nabla} \mu^i) \ra}
{\la N_*^i(\sigma), 
\tilde{N}^i(\sigma, \rho^i, \mu^i, \nabla\rho^i, \bar{\nabla}\mu^i) \ra} \,.
\end{align*}
Note that we omitted the mapping $\pr^i$ in the functions $\mu^i$ 
for reasons of shortness.

Now we will write equation \eqref{rho_eq} as an evolution equation, 
which is nonlocal in space, solely for the mappings $\rho^i$ by using 
the linear dependence \eqref{equ:lindep} on $\Sigma_\ast$. 
To this end, we use \eqref{equ:lindep} in the form 
$\mu^i =(\left. \mathcal{T} \brho \right|_{\Sigma_\ast} )^i$ 
and rewrite \eqref{rho_eq} into
\begin{align} \label{rho_equ_rewritten}
\partial_t \rho^i 
= \mathcal{F}^i(\rho^i,\left.\brho \right|_{\Sigma_\ast}) 
  + \mathfrak{a}_{\dag}^i(\rho^i,\left.\brho \right|_{\Sigma_\ast}) \partial_t 
    (\mathcal{T} \brho \circ \pr^i)^i \,,
\end{align}
where (omitting the $t$-variable for the moment)
\begin{align*}
\mathcal{F}^i(\rho^i,\left.\brho \right|_{\Sigma_\ast}) (\sigma) 
&= a^i(\sigma,\rho^i,(\left. \mathcal{T} \brho \right|_{\Sigma_\ast} )^i)
   H^i(\sigma,\rho^i,(\left. \mathcal{T} \brho \right|_{\Sigma_\ast} )^i) 
&\quad \mbox{for } \, \sigma \in \G^i_\ast \,, \\
\mathfrak{a}_{\dag}^i(\rho^i,\left.\brho \right|_{\Sigma_\ast}) (\sigma) 
&= a^i_\dag(\sigma,\rho^i,(\left. \mathcal{T} \brho \right|_{\Sigma_\ast} )^i)
&\quad \mbox{for } \, \sigma \in \G^i_\ast \,.
\end{align*}
With the following notations on $\Sigma_\ast$ given by
\begin{align*}
\mathcal{F}(\brho,\left.\brho \right|_{\Sigma_\ast})(\sigma) 
&= \left( \mathcal{F}^i(\rho^i,\left.\brho \right|_{\Sigma_\ast}) (\sigma) 
   \right)_{i=1,2,3}  
&& \mbox{for } \, \sigma \in \Sigma_\ast \,, \\
\mathcal{D}_\dag(\brho,\left.\brho \right|_{\Sigma_\ast})(\sigma) 
&= \operatorname{diag}\left( \left( 
   \mathfrak{a}^i_\dag(\rho^i,\left.\brho \right|_{\Sigma_\ast}) (\sigma) 
   \right)_{i=1,2,3} \right)
&& \mbox{for } \, \sigma \in \Sigma_\ast 
\end{align*}
we can write \eqref{rho_equ_rewritten} as vector identity on $\Sigma_\ast$ 
through
\begin{align} \label{rho_equ_vector}
\partial_t \brho 
= \mathcal{F}(\brho,\left.\brho \right|_{\Sigma_\ast}) 
  + \mathcal{D}_\dag(\brho,\left.\brho \right|_{\Sigma_\ast}) 
    \mathcal{T}(\partial_t\brho) \,.
\end{align}
Rearranging leads to 
$$
\left( Id - \mathcal{D}_\dag(\brho,\left.\brho \right|_{\Sigma_\ast}) 
\mathcal{T} \right) \partial_t \brho 
= \mathcal{F}(\brho,\left.\brho \right|_{\Sigma_\ast}) 
\quad \mbox{on } \, \Sigma_\ast \,.
$$
Then, with the help of $\mathcal{P}(\brho,\left.\brho \right|_{\Sigma_\ast})$ 
given by
\begin{align} \label{mathcal_P}
\mathcal{P}(\brho,\left.\brho \right|_{\Sigma_\ast})
:= {\cal T} \left( 
   Id - \mathcal{D}_\dag(\brho,\left.\brho \right|_{\Sigma_\ast}) 
   {\cal T} \right)^{-1},
\end{align}
it follows that 
\begin{align*}
\mathcal{T} \partial_t \brho 
= \mathcal{P}(\brho,\left.\brho \right|_{\Sigma_\ast}) 
  \mathcal{F}(\brho,\left.\brho \right|_{\Sigma_\ast}) 
\quad \mbox{on } \; \Sigma_\ast \,. 
\end{align*}
In a neighbourhood of $\Sigma_\ast$, where $\pr^i$ is defined, this leads to
$$
\partial_t \mu^i(\pr^i(\sigma)) 
= ({\cal T} \partial_t \brho(\pr^i(\sigma)) )^i 
= \left( \left\{ \mathcal{P}(\brho,\left.\brho \right|_{\Sigma_\ast}) 
  \mathcal{F}(\brho,\left.\brho \right|_{\Sigma_\ast}) \right\} \circ \pr^i 
  \right)^i \,. 
$$
Hence, the equation (\ref{rho_eq}) is rewritten as
$$
\partial_t \rho^i 
= \mathcal{F}^i(\rho^i,\left.\brho \right|_{\Sigma_\ast})
  + \mathfrak{a}_{\dag}^i(\rho^i,\left.\brho \right|_{\Sigma_\ast}) 
    \left( \left\{ \mathcal{P}(\brho,\left.\brho \right|_{\Sigma_\ast}) 
    \mathcal{F}(\brho,\left.\brho \right|_{\Sigma_\ast}) \right\} \circ \pr^i 
    \right)^i
\quad \mbox{on } \; \G^i_\ast \,.
$$
The second term of the right hand side of this equation contains non-local 
terms including the highest order derivatives, that is, the second order 
derivatives. 

The angle conditions at the triple junction $\Sigma_\ast$ can be written as
\begin{align*}
\mathcal{G}^2(\brho) 
:= \la \mathcal{N}^1(\brho) , \mathcal{N}^2(\brho) \ra - \cos \theta^3 
&= 0 \quad \mbox{ on } \, \Sigma_\ast \,, \; t \geq 0 \,, \\
\mathcal{G}^3(\brho) 
:= \la \mathcal{N}^2(\brho) , \mathcal{N}^3(\brho) \ra - \cos \theta^1 
&= 0 \quad \mbox{ on } \, \Sigma_\ast \,, \; t \geq 0 
\end{align*}
with the notation 
${\cal N}^i(\bmv)(\s,t):=\tilde{N}^i(\sigma,v^i(\s,t),
({\cal T}(\bmv\circ\pr(\s,t)))^i,\nabla v^i(\s,t),
\bar{\nabla}({\cal T}(\bmv\circ\pr(\s,t)))^i)$. 
Note that due to $\sigma = \pr^i(\sigma)$ for $\sigma \in \Sigma_\ast$ 
the operators $\mathcal{G}^1$ and $\mathcal{G}^2$ are local differential 
operators and $\mathcal{G}^2$ depends only on $\rho^1$ and $\rho^2$ 
as well as $\mathcal{G}^3$ only on $\rho^2$ and $\rho^3$. 

Finally we have to take care of the equations \eqref{fourcond}, 
which are needed to make sure that the attachment condition 
\eqref{meetcond} holds. Equation \eqref{fourcond}$(ii)$ is already included 
implicitly, so that we are left with \eqref{fourcond}$(i)$ given by
\begin{align*}
\mathcal{G}^1(\brho) 
:= \gamma^1 \rho^1 + \gamma^2 \rho^2 + \gamma^3 \rho^3 = 0 
\,\,\ \mbox{on}\,\,\ \Sigma_\ast \,,\ t \geq 0 \,.
\end{align*}
Altogether this leads to the following nonlinear, nonlocal problem for 
$i=1,2,3$:
\begin{align} \label{prob:nonlinearnonlocal}
\left\{ \begin{array}{ll}
\partial_t \rho^i 
= \mathcal{F}^i(\rho^i,\left.\brho \right|_{\Sigma_\ast})
  + \mathfrak{a}_{\dag}^i(\rho^i,\left.\brho \right|_{\Sigma_\ast}) 
    \left( \left\{ \mathcal{P}(\brho,\left.\brho \right|_{\Sigma_\ast}) 
    \mathcal{F}(\brho,\left.\brho \right|_{\Sigma_\ast}) \right\} \circ \pr^i
    \right)^i
  & \mbox{on}\,\,\ \G^i_\ast \;,\ t \geq 0 \,, \\
\mathcal{G}^i(\brho) = 0 
  & \mbox{on}\,\,\ \Sigma_\ast \;,\ t \geq 0 \,, \\
\rho^i(.\,,0) = \rho^i_0  
  & \mbox{on}\,\,\ \G^i_\ast\,.
\end{array} \right.
\end{align}

\subsection{The compatibility conditions} \label{subsec:compcond}

For $\rho^i_0$ we assume the compatibility conditions
\begin{equation} \label{comp:rho_0}
\mathcal{G}^i(\brho_0) = 0 
\,\,\ \mbox{on}\,\,\ \Sigma_\ast \quad \mbox{ and } \quad
\sum_{i=1}^3 \gamma^i 
\mathcal{K}^i(\rho^i_0,\left.\brho_0 \right|_{\Sigma_\ast}) = 0 
\,\,\ \mbox{on}\,\,\ \Sigma_\ast \,,
\end{equation}
where $\mathcal{K}^i$ denotes the right side of the first line 
in \eqref{prob:nonlinearnonlocal}. 
To state all the dependencies explicitly, we remark that by construction 
there is a function $\widetilde{\mathcal{K}}^i$ such that
\begin{align} \label{equ:tildeK}
\mathcal{K}^i(\rho^i,\left.\brho \right|_{\Sigma_\ast})(\sigma,t) 
= \widetilde{\mathcal{K}}^i \big(\sigma,&\,\rho^i(\sigma,t),
  \nabla \rho^i(\sigma,t),\nabla^2 \rho^i(\sigma,t),
  \left.\brho \right|_{\Sigma_\ast}(\pr^i(\sigma),t),\ldots \nonumber \\
& \ldots, \bar{\nabla} \left.\brho \right|_{\Sigma_\ast}(\pr^i(\sigma),t),
  \bar{\nabla}^2 \left.\brho \right|_{\Sigma_\ast}(\pr^i(\sigma),t) \big) \,. 
\end{align}

Note that we always set $\bomu_0 = \mathcal{T} \brho_0$ on $\Sigma_\ast$ and 
therefore the geometric compatibility condition \eqref{sum_H0} is fulfilled 
since we require \eqref{comp:rho_0} for $\brho_0$. 
This is stated in the following lemma.

\begin{lemma} \label{lem:geomcompcond}
The compatibility conditions \eqref{comp:rho_0} for $\brho_0$ imply 
the geometric compatibility condition \eqref{sum_H0}.
\end{lemma}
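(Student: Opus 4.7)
The plan is to show that the two compatibility conditions in \eqref{comp:rho_0} together encode exactly the infinitesimal geometric compatibility, and that the latter, combined with Young's law \eqref{younglaw3} at $t=0$, produces \eqref{sum_H0}. First I would unpack $\mathcal{K}^i$ at $t=0$. By construction, $\mathcal{K}^i(\rho^i_0,\left.\brho_0\right|_{\Sigma_\ast})$ is the value of $\partial_t \rho^i$ at $t=0$ that makes the mean curvature flow equation $\wV^i = \beta^i \wH^i$ consistent with the linear dependence $\bomu_0 = \mathcal{T}\brho_0$. On $\Sigma_\ast$ the parametrization simplifies to $\Phi^i(\sigma,t) = \sigma + \rho^i(\sigma,t) N^i_\ast(\sigma) + \mu^i(\sigma,t)\nu^i_\ast(\sigma)$ since $\pr^i(\sigma)=\sigma$ and $\tau^i_\ast = \nu^i_\ast$ there. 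Differentiating $\bomu = \mathcal{T}\brho$ in time yields $\partial_t \mu^i|_{t=0} = (\mathcal{T}\mathcal{K})^i$, so on $\Sigma_\ast$,
\begin{align*}
 \partial_t \Phi^i|_{t=0} = \mathcal{K}^i N^i_\ast + (\mathcal{T}\mathcal{K})^i \nu^i_\ast \,.
\end{align*}

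Next I would invoke Lemma \ref{lem:equmeetcond} applied to the triple $(\mathcal{K}^i,(\mathcal{T}\mathcal{K})^i)$ in place of $(\rho^i,\mu^i)$. Condition $(ii)$ holds trivially by definition, while condition $(i)$ is precisely the second compatibility condition $\sum_i \gamma^i \mathcal{K}^i = 0$ from \eqref{comp:rho_0}. Lemma \ref{lem:equmeetcond} then gives
\begin{align*}
 \partial_t \Phi^1|_{t=0} = \partial_t \Phi^2|_{t=0} = \partial_t \Phi^3|_{t=0} \quad \text{on } \Sigma_\ast \,,
\end{align*}
so this common vector, which I will denote $c'(0)$, is a well-defined velocity of the triple junction curve.

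Now I would use the geometric interpretation of $\wV^i$: since $\wV^i_0(\sigma) = \la \wN^i_0(\sigma),\partial_t \Phi^i(\sigma,0)\ra$ and the mean curvature flow equation at $t=0$ gives $\wV^i_0 = \beta^i H^i_0$ on $\Sigma_\ast$, we obtain $\beta^i H^i_0 = \la \wN^i_0, c'(0) \ra$ on $\Sigma_\ast$. The first compatibility condition $\mathcal{G}^i(\brho_0)=0$ together with $\mathcal{G}^1(\brho_0)=0$ enforces the angle conditions and the attachment, so Young's law in the form \eqref{younglaw3} holds, giving $\gamma^1 \wN^1_0 + \gamma^2 \wN^2_0 + \gamma^3 \wN^3_0 = 0$ on $\Sigma_\ast$. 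Pairing this against $c'(0)$ yields
\begin{align*}
 \sum_{i=1}^3 \gamma^i \beta^i H^i_0
 = \sum_{i=1}^3 \gamma^i \la \wN^i_0, c'(0) \ra
 = \Bigl\la \sum_{i=1}^3 \gamma^i \wN^i_0, c'(0) \Bigr\ra = 0 \,,
\end{align*}
which is \eqref{sum_H0}. The only place that requires care is the application of Lemma \ref{lem:equmeetcond} to the time-differentiated data; apart from this, all steps are straightforward manipulations of the definitions already set up in Section \ref{subsec:fullprob}.
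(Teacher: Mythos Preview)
Your proposal is correct and follows essentially the same route as the paper's proof: both use Lemma~\ref{lem:equmeetcond} (applied to the pair $(\mathcal{K}^i_0,(\mathcal{T}\mathcal{K}_0)^i)$) to obtain a vector independent of $i$, establish that its inner product with $N^i_0$ equals $\beta^i H^i_0$, and then pair with Young's law \eqref{younglaw3} for $\Gamma^i_0$. The only noteworthy difference is in how the key identity $\la \mathcal{K}^i_0 N^i_\ast + (\mathcal{T}\mathcal{K}_0)^i\nu^i_\ast,\, N^i_0\ra = \beta^i H^i_0$ is justified. You argue it ``by construction'', interpreting $\mathcal{K}^i_0$ and $(\mathcal{T}\mathcal{K}_0)^i$ as the values of $\partial_t\rho^i$ and $\partial_t\mu^i$ at $t=0$ for which the rewriting of $\wV^i=\beta^i\wH^i$ in Section~\ref{subsec:fullprob} was designed; the paper instead verifies the same identity by an explicit algebraic computation, reducing it to $\mathcal{T}(Id-\mathcal{D}_{\dagger,0}\mathcal{T})^{-1}\mathcal{F}_0=\mathcal{T}\mathcal{K}_0$. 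Your geometric phrasing is cleaner, but be aware that the phrase ``the mean curvature flow equation at $t=0$'' is slightly loose here, since no solution exists yet; what you are really using is the purely algebraic fact that $\mathcal{K}^i_0 = a^i_0 H^i_0 + a^i_{\dagger,0}(\mathcal{T}\mathcal{K}_0)^i$ on $\Sigma_\ast$, which is precisely what the paper checks.
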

\begin{proof}
Using the abbreviations $\mathcal{K}^i_0 
= \mathcal{K}^i(\rho^i_0,\left.\brho_0 \right|_{\Sigma_\ast})$ and 
$\mathcal{L}^i_0 = \left( \mathcal{T} \mathcal{K}_0 \right)^i$, where 
$\mathcal{K}_0=\left( \mathcal{K}^i_0 \right)_{i=1,2,3}$, 
we get from the second compatibility condition in \eqref{comp:rho_0} with 
arguments similar as in the proof of Lemma~\ref{lem:equmeetcond} 
(see \cite{DG11}) that
\begin{align*}
\mathcal{K}^i_0 N^i_\ast + \mathcal{L}^i_0 \tau^i_\ast 
= \mathcal{K}^j_0 N^j_\ast + \mathcal{L}^j_0 \tau^j_\ast 
\quad \mbox{on } \, \Sigma_\ast \,.
\end{align*}
Now we show on $\Sigma_\ast$ the following identity
\begin{align} \label{ident:help}
\la \left( \mathcal{K}^i_0 N^i_\ast + \mathcal{L}^i_0 \tau^i_\ast \right), 
N^i_0 \ra = \beta^i H^i_0 \quad \mbox{on } \, \Sigma_\ast \,.
\end{align}
To see this, we write in the following an index $0$ on every term to 
indicate evaluation at $\brho_0$ to get
\begin{align*}
\mathcal{K}^i_0 
&= a^i_0 H^i_0 + a^i_{\dagger,0} 
   \left( \mathcal{T} (Id - \mathcal{D}_{\dagger,0} \mathcal{T})^{-1} 
   \mathcal{F}_0 \right)^i \\
&= a^i_0 H^i_0 + \left( \mathcal{D}_{\dagger,0} \mathcal{T} 
   (Id - \mathcal{D}_{\dagger,0} \mathcal{T})^{-1} \mathcal{F}_0 \right)^i \,, 
\\
\mathcal{K}_0 
&= \mathcal{F}_0 + \mathcal{D}_{\dagger,0} \mathcal{T} 
   (Id - \mathcal{D}_{\dagger,0} \mathcal{T})^{-1} \mathcal{F}_0 \,,
\end{align*}
respectively. With the definition of $a^i$ and $a^i_\dagger$ this leads to 
\begin{align*}
\mathcal{K}^i_0 \la N^i_\ast , N^i_0 \ra 
&= \beta^i H^i_0 - \la \tau^i_\ast , N^i_0 \ra 
   \left( \mathcal{T} (Id - \mathcal{D}_{\dagger,0} \mathcal{T})^{-1} 
   \mathcal{F}_0 \right)^i.
\end{align*}
In order to obtain \eqref{ident:help} it is therefore enough to show that
$$
- \la \tau^i_\ast , N^i_0 \ra 
\left( \mathcal{T} (Id - \mathcal{D}_{\dagger,0} \mathcal{T})^{-1} 
\mathcal{F}_0 \right)^i
= - \mathcal{L}^i_0 \la \tau^i_\ast , N^i_0 \ra \; 
$$
which is, without loss of generality, equivalent to
$$
\mathcal{T} (Id - \mathcal{D}_{\dagger,0} \mathcal{T})^{-1} \mathcal{F}_0 
= \mathcal{T} \mathcal{K}_0  \,.
$$
To obtain the last equality we observe that
\begin{align*}
(Id - \mathcal{D}_{\dagger,0} \mathcal{T})^{-1} \mathcal{F}_0 - \mathcal{K}_0 
&= (Id - \mathcal{D}_{\dagger,0} \mathcal{T})^{-1} \mathcal{F}_0 
   - \mathcal{F}_0 - \mathcal{D}_{\dagger,0} \mathcal{T} 
     (Id - \mathcal{D}_{\dagger,0} \mathcal{T})^{-1} \mathcal{F}_0 \\
&= (Id - \mathcal{D}_{\dagger,0} \mathcal{T})(Id - \mathcal{D}_{\dagger,0} 
   \mathcal{T})^{-1} \mathcal{F}_0  - \mathcal{F}_0 \\
&= 0 \,,
\end{align*}
so that finally \eqref{ident:help} is verified. 

Since the term in brackets on the left side of \eqref{ident:help} is 
independent of $i$, we can multiply by $\gamma^i$, sum over $i=1,2,3$ and 
use $\eqref{sum_N}$ resulting from the angle conditions for $\G^i_0$ to 
derive finally equation \eqref{sum_H0}, that is 
$\sum_{i=1}^3 \gamma^i \beta^i H^i_0 = 0$ on $\Sigma_\ast$. 
\end{proof}

\section{Linearization} \label{sec:linearization}

In this section we will derive the linearization of the nonlinear nonlocal 
problem \eqref{prob:nonlinearnonlocal} around $\brho \equiv \mathbf{0}$, 
that is around the fixed reference hypersurfaces $\G^i_\ast$. This will 
be done by considering the geometric problem \eqref{triplefix} and 
linearize this around $(\brho,\boldsymbol{\mu}) \equiv \mathbf{0}$. 
For this part we can use the work of Depner and Garcke \cite{DG11}, 
where the authors considered stationary reference hypersurfaces, 
and comment on the differences. To explain our notation we give 
the calculations for the normal velocity and just refer for 
the linearization of the mean curvature and the angle conditions 
to \cite{DG11}. In each term in \eqref{triplefix}, we write 
$\epsi u^i$ and $\epsi \phi^i$ instead of $\rho^i$ and $\mu^i$ for 
$i=1,2,3$, differentiate with respect to $\epsi$, and set $\epsi=0$ 
in the resulting equations. Here, we have to assume the triple junction condition 
\eqref{meetcond} for $\Phi^i_{u^i,\phi^i}$, which is nothing else than 
assuming it for $\Phi^i_{\epsi u^i, \epsi \phi^i}$. In this way, we will get 
linear partial differential equations, where we then express terms of 
$\phi^i$ as nonlocal terms in $\bmu$ with the help of \eqref{equ:lindep} 
for $\bmu$ and $\boldsymbol{\phi}$.

\medskip

\noindent {\bf Linearization of the normal velocity:}
For the linearization of the normal velocity $\wV^i$, we obtain
\begin{align*}
 & \left. \frac{d}{d \epsi} V^i \circ \Phi^i_{\epsi u^i, \epsi \phi^i}
   (\sigma,t) \right|_{\epsi=0} \\
 &=\frac{d}{d\epsi}\bigl\{
   \la N^i\circ\Phi^i_{\epsi u^i,\epsi\phi^i}, N^i_* \ra
    \,\partial_t(\epsi u^i) 
    +\la N^i\circ\Phi^i_{\epsi u^i,\epsi\phi^i}, \tau^i_* \ra
    \,\partial_t(\epsi\phi^i)\bigr\}\biggr|_{\epsi=0} \\[0.1cm]%
 &= \underbrace{\la N^i_\ast(\sigma), N^i_\ast(\sigma) \ra}_{= 1} \, 
     \partial_t u^i(\sigma,t) 
     + \underbrace{\la N^i_\ast(\sigma), \tau^i_\ast(\sigma) \ra}_{= 0} \, 
     \partial_t \phi^i(\pr^i(\sigma),t) \\
 &= \partial_t u^i(\sigma,t) \,.
\end{align*}

\medskip

\noindent {\bf Linearization of the mean curvature:}
For the linearization of the mean curvature $\wH^i$, we use 
the following result, see Depner, Garcke \cite{DG11} and Depner \cite{Dep10}, 
where \cite{Dep10} contains the detailed calculation:
\begin{align*}
 & \left. \frac{d}{d \epsi} 
    H^i \circ \Phi^i_{\epsi u^i, \epsi \phi^i}(\sigma,t) 
    \right|_{\epsi=0} \\
 &= \Delta_{\G^i_\ast} u^i(\sigma,t) 
     + |\Pi^i_\ast|^2(\sigma,t) u^i(\sigma,t)
     + \la \grad_{\G^i_\ast} H^i(\sigma), 
       \left[ \left. \frac{d}{d \epsi} 
       \Phi^i_{\epsi u^i, \epsi \phi^i}(\sigma,t) 
       \right|_{\epsi=0} \right]^T \ra \;,
\end{align*}
where $\Delta_{\Gamma^i_\ast}$ is the Laplace-Beltrami operator on 
$\Gamma^i_\ast$, $\Pi^i_\ast$ denotes the second fundamental form of 
$\G^i_\ast$ and $|\Pi^i_\ast|^2$ is the squared norm of $\Pi^i_\ast$ and hence 
given as the sum of the squared principal curvatures. Furthermore 
$\grad_{\G^i_\ast}$ is the surface gradient on $\G^i_\ast$ and 
$[\,\cdot\,]^T$ is the tangential part of a vector. 
Note that the last term would vanish for reference hypersurfaces with 
constant mean curvature. For the last term we compute
\begin{align*}
 \left[ \left. \frac{d}{d \epsi} 
 \Phi^i_{\epsi u^i, \epsi \phi^i}(\sigma,t) \right|_{\epsi=0} \right]^T 
 =&\, \left[ \left. \frac{d}{d \epsi} 
 \left( \sigma + \epsi u^i(\sigma,t) \, N^i_\ast(\sigma) 
 + \epsi \phi^i(\pr^i(\sigma),t) \, \tau^i_\ast(\sigma)
 \right) \right|_{\epsi=0} \right]^T \\
 =&\, \left[ u^i(\sigma,t)  \, N^i_\ast(\sigma) 
 + \phi^i(\pr^i(\sigma),t) \, \tau^i_\ast(\sigma) \rule{0cm}{0,35cm} \right]^T 
 \\[0.1cm]%
 =&\, \phi^i(\pr^i(\sigma),t) \, \tau^i_\ast(\sigma) \,,
\end{align*}
so that we get
\begin{align*}
 &\left. \frac{d}{d \epsi} H^i \circ 
 \Phi^i_{\epsi u^i, \epsi \phi^i}(\sigma,t) \right|_{\epsi=0} \\[0.1cm]%
 &= \Delta_{\G^i_\ast} u^i(\sigma,t) 
 + |\Pi^i_\ast|^2(\sigma,t) u^i(\sigma,t) 
 + \la \grad_{\G^i_\ast} H^i(\sigma), \tau^i_\ast(\sigma) \ra\,
   \phi^i(\pr^i(\sigma),t) \,. 
\end{align*}

\medskip

\noindent {\bf Linearization of the angle conditions:}
The linearization of the angle condition $\la \wN^i,\wN^j \ra = \cos \theta^k$ 
is the technically most challenging part and 
we use the following result of Depner and Garcke \cite{DG11}:
\begin{align*}
 & \left. \frac{d}{d \epsi} 
   \la N^i \circ \Phi^i_{\epsi u^i, \epsi \phi^i}, 
   N^j \circ \Phi^j_{\epsi u^j, \epsi \phi^j} \ra 
   \right|_{\epsi=0} \\
 &= \partial_{\nu^i_\ast} u^i + \Pi^i_\ast(\nu^i_\ast,\nu^i_\ast) \phi^i
    - \partial_{\nu^j_\ast} u^j - \Pi^j_\ast(\nu^j_\ast,\nu^j_\ast) \phi^j 
\end{align*}
on $\Sigma_\ast$ for $t \geq 0$ and for $(i,j)=(1,2)$ and $(2,3)$.
Note that in \cite{DG11} there was a second equivalent formulation of 
the above formula, which is not possible here, since the 
reference hypersurfaces are not stationary. 
Nevertheless with the help of \eqref{equ:lindep} we can get rid of 
$\phi^i$ by expressing it with the help of $\bmu$. 

Altogether, we get for the linearization of (\ref{triplefix}) 
the following linear system of partial differential equations for 
$(u^i,\phi^i)$ and $i=1,2,3$.
\begin{equation} \label{triplinearized}
 \left\{
 \begin{array}{ll}
  \partial_t u^i 
  = \beta^i \left( \Delta_{\G^i_\ast} u^i + |\Pi^i_\ast|^2 u^i \right)
    + \beta^i \la \grad_{\G^i_\ast} H^i, \tau^i_\ast \ra \, 
      (\phi^i\circ\pr^i) \quad
  & \mbox{on } \G^i_\ast \times [0,T] \;, \\[0.1cm]
  \gamma^1 u^1 + \gamma^2 u^2 + \gamma^3 u^3 = 0 
  & \mbox{on } \Sigma_\ast \times [0,T]  \,, \\[0.1cm]
  \partial_{\nu^1_\ast} u^1
    + \Pi^1_\ast(\nu^1_\ast,\nu^1_\ast) \phi^1
  = \partial_{\nu^2_\ast} u^2 
    + \Pi^2_\ast(\nu^2_\ast,\nu^2_\ast) \phi^2 
  & \mbox{on } \Sigma_\ast \times [0,T]  \,, \\[0.1cm]
  \partial_{\nu^2_\ast} u^2 
    + \Pi^2_\ast(\nu^2_\ast,\nu^2_\ast) \phi^2 
  = \partial_{\nu^3_\ast} u^3 
    + \Pi^3_\ast(\nu^3_\ast,\nu^3_\ast) \phi^3 
  & \mbox{on } \Sigma_\ast \times [0,T] \,, \\[0.1cm]
  (u^i,\phi^i)\big|_{t=0} = (\rho^i_0,\mu^i_0)
  & \mbox{on } \G^i_\ast \,.
 \end{array}
 \right.
\end{equation}
Note that $\phi^i \circ \pr^i$ can be rewritten as 
$\left(\mathcal{T} (\bmu \circ \pr^i) \right)^i$ due to equation 
\eqref{equ:lindep}, which also has to hold for $\boldsymbol{\phi}$ and $\bmu$. 
Now we are able to rewrite the nonlinear, nonlocal problem 
\eqref{prob:nonlinearnonlocal} as a perturbation of 
a linearized problem. Let the operator ${\cal A}^i$ and 
the function $\zeta^i$ be given by
$$
{\cal A}^i 
= \beta^i \bigl\{\Delta_{\Gamma_\ast^i}+|\Pi_\ast^i|^2 I \bigr\}, \quad
\zeta^i(\sigma) 
= \beta^i \la\grad_{\G^i_\ast} H^i_\ast(\s) , \tau^i_\ast(\s) \ra \,.
$$
We also introduce an operator corresponding to the linearized boundary 
conditions given by
\begin{align*}
 \dis\sum_{j=1}^3{\cal B}^{ij}u^j 
 &= \left\{\begin{array}{ll}
     \gamma^1u^1+\gamma^2u^2+\gamma^3u^3,&i=1, \\[0.1cm]
     \la\nabla_{\Gamma_\ast^1}u^1,\nu_\ast^1\ra 
      +\dfrac{\kappa_\ast^1}{s^1}(c^2u^2-c^3u^3)
      -\bigl\{\la\nabla_{\Gamma_\ast^2}u^2,\nu_\ast^2\ra 
      +\dfrac{\kappa_\ast^2}{s^2}(c^3u^3-c^1u^1)\bigr\},&i=2, \\[0.1cm]
     \la\nabla_{\Gamma_\ast^2}u^2,\nu_\ast^2\ra 
      +\dfrac{\kappa_\ast^2}{s^2}(c^3u^3-c^1u^1)
      -\bigl\{\la\nabla_{\Gamma_\ast^3}u^3,\nu_\ast^3\ra 
      +\dfrac{\kappa_\ast^3}{s^3}(c^1u^1-c^2u^2)\bigr\},&i=3,
    \end{array}\right.
\end{align*}
where $\kappa_\ast^i:= \Pi^i_\ast(\nu^i_\ast,\nu^i_\ast)$ denotes 
the normal curvature of $\G^i_\ast$ in direction of $\nu^i_\ast$. 

With this notation we can rewrite the nonlinear nonlocal problem 
\eqref{prob:nonlinearnonlocal} into the following one, where $i=1,2,3$:
\begin{equation}
  \left\{\begin{array}{ll}
           \partial_t u^i={\cal A}^iu^i+\zeta^i({\cal T}(\bmu\circ\pr^i))^i
              +\mathfrak{f}^i(u^i,\left.\bmu\right|_{\Sigma_\ast})
              & \mbox{on } \G^i_\ast \times[0,T], \\[0.1cm]
           \dis\sum_{j=1}^3{\cal B}^{ij}u^j=\mathfrak{b}^i(\bmu) 
              & \mbox{on } \Sigma_\ast \times [0,T], \\[0.6cm]
           u^i\big|_{t=0}=\rho^i_0 & \mbox{on } \G^i_\ast \,.
\end{array}\right.
\label{L_eq}
\end{equation}
Herein, $\mathfrak{f}^i$ and $\mathfrak{b}^i$ are defined through
\begin{align}
 \begin{split}
  \mathfrak{f}^i(v^i,\left.\bmv\right|_{\Sigma_\ast}) 
:={}& {\cal F}^i(v^i,\left.\bmv\right|_{\Sigma_\ast}) -\Bigl\{{\cal A}^iv^i
    +\zeta^i({\cal T}(\bmv\circ\pr^i))^i\Bigr\}  \\ 
  {}& +\mathfrak{a}_{\dag}^i(v^i,\left.\bmv\right|_{\Sigma_\ast}) 
     ( \{{\cal P}(\bmv,\left.\bmv\right|_{\Sigma_\ast})
     {\cal F}(\bmv,\left.\bmv\right|_{\Sigma_\ast})\} \circ \pr^i )^i,  \label{equ:defoff}
 \end{split} \\
 \mathfrak{b}^i(\bmv) 
   :={}&  -\Bigl\{{\cal G}^i(\bmv)-\dis\sum_{j=1}^3{\cal B}^{ij}v^j\Bigr\} \,. \label{equ:defofb}
\end{align}
Note that the first boundary condition on the triple junction $\Sigma_\ast$ 
in problem \eqref{prob:nonlinearnonlocal} is already linear and therefore 
$\mathfrak{b}^1(\bmv) \equiv 0$. But we will nevertheless use 
$\mathfrak{b}^1$ to avoid some case by case analysis. 

\section{Analysis of the linearized problem}\label{sec:lin}

In this section we consider the linear nonhomogeneous problem corresponding 
to \eqref{L_eq}. We will give a local existence result for the case 
with initial data zero and then outline the necessary steps for 
the arbitrary case. First we introduce for an arbitrary smooth 
Riemannian manifold $(\G,g)$ some notation. 
For an integer $k$ and smooth functions $u\,:\,\Gamma\to{\mathbb R}$, 
we denote by $\nabla^k u$ the $k$-th covariant derivative of $u$ and 
by $|\nabla^k u|$ the norm of $\nabla^k u$ defined in a local chart by, 
see e.g. \cite{Aub82},
$$
|\nabla^k u|^2=g^{i_1j_1}\cdots g^{i_kj_k}(\nabla^k_{i_1\ldots i_k} u)
(\nabla^k_{j_1\ldots j_k} u).
$$
Note that $\nabla_i u=\partial_i u$ and 
$\nabla^2_{i_1i_2} u=\nabla_{i_1}\nabla_{i_2}u
=\partial_{i_1}\partial_{i_2}u-\Gamma_{i_1i_2}^m\partial_mu$. 
For $T>0$ and $0<\alpha<1$, set $Q_T=\Gamma\times[0,T]$ and 
\begin{align*}
&\|u\|_{\infty}=\sup_{(\sigma,t)\in Q_T}|u(\sigma,t)|, \\
&\la u\ra_\sigma^{\alpha}
=\sup_{(\sigma,t),\,(\tilde{\sigma},t)\in Q_T,\,\sigma\ne\tilde{\sigma}}
\frac{|u(\sigma,t)-u(\tilde{\sigma},t)|}
{\{d_g(\sigma,\tilde{\sigma})\}^{\alpha}}, \\
&\la u\ra_t^{\alpha}
=\sup_{(\sigma,t),\,(\sigma,\tilde{t}\,)\in Q_T,\,t\ne\tilde{t}}
\frac{|u(\sigma,t)-u(\sigma,\tilde{t})|}{|t-\tilde{t}\,|^{\alpha}},
\end{align*}
where $d_g$ denotes the distance on $\G$ induced by the metric $g$. 
Then, we define the norms $\|u\|_{C^{\alpha,\frac{\alpha}2}(Q_T)}$ and 
$\|u\|_{C^{2+\alpha,1+\frac{\alpha}2}(Q_T)}$ as 
\begin{align*}
&\|u\|_{C^{\alpha,\frac{\alpha}2}(Q_T)}=\|u\|_{\infty}+\la u\ra_x^{\alpha}
+\la u\ra_t^{\alpha}, \\
&\|u\|_{C^{2+\alpha,1+\frac{\alpha}2}(Q_T)}
=\|u\|_{\infty}+\|\nabla u\|_{\infty}
+\|\nabla^2 u\|_{C^{\alpha,\frac{\alpha}2}(Q_T)}
+\|\partial_t u\|_{C^{\alpha,\frac{\alpha}2}(Q_T)}.
\end{align*}

Set ${\cal X}_T=C^{2+\alpha,1+\frac{\alpha}2}(Q^1_T)\times 
C^{2+\alpha,1+\frac{\alpha}2}(Q^2_T)\times 
C^{2+\alpha,1+\frac{\alpha}2}(Q^3_T)$, 
where $Q^i_T=\Gamma_\ast^i\times[0,T]$.
Then we have the following theorem about existence of solutions to 
the linearized, nonhomogeneous problem with initial data zero. 

\begin{theo}\label{thm:L_exist}
Let $\alpha\in(0,1)$. Then there exists a $\delta_0>0$ such that 
for every $f^i\in C^{\alpha,\frac{\alpha}2}(Q^i_{\delta_0})$ 
and $b^i\in C^{1+\alpha,\frac{1+\alpha}2}(\Sigma_\ast\times[0,\delta_0])$, 
$i=1,2,3$, with $b^1 \equiv 0$ and which fulfill the compatibility condition
$$
(\gamma^1f^1+\gamma^2f^2+\gamma^3f^3)\big|_{t=0}=0,\,\,\ 
b^i\big|_{t=0}=0\,\,\ \mbox{on}\,\ \Sigma_\ast, \; i=2,3 \,,
$$
the problem
\begin{equation}
\left\{\begin{array}{ll}
\partial_t u^i={\cal A}^iu^i+\zeta^i({\cal T}(\bmu\circ\pr^i))^i+f^i
& \mbox{on } \; \G^i_\ast\times[0,T], \\
\dis\sum_{j=1}^3{\cal B}^{ij}u^j=b^i 
& \mbox{on } \; \Sigma_\ast\times[0,T], \\[0.6cm]
u^i\big|_{t=0}=0 & \mbox{on } \; \G^i_\ast
\end{array}\right.
\label{L_eq_1}
\end{equation}
for $i=1,2,3$ has a unique solution $(u^1,u^2,u^3)\in{\cal X}_{\delta_0}$. 
Moreover, there exists a $C>0$, which is independent of $\delta_0$, such that
$$
\sum_{i=1}^3\|u^i\|_{C^{2+\alpha,1+\frac{\alpha}2}(Q^i_{\delta_0})}
\le C\sum_{i=1}^3\bigl\{\|f^i\|_{C^{\alpha,\frac{\alpha}2}(Q^i_{\delta_0})}
+\|b^i\|_{C^{1+\alpha,\frac{1+\alpha}2}(\Sigma_\ast\times[0,\delta_0])}\bigr\}. 
$$
\end{theo}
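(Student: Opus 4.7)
My plan is to reduce the theorem to classical Schauder theory for a localized, constant-coefficient model problem at the triple junction, assembled globally via a partition of unity, and then handle the nonlocal zero-order term $\zeta^i(\mathcal{T}(\bmu\circ\pr^i))^i$ by a small-time perturbation that also provides the $\delta_0$-independent constant. As a preliminary step I establish a weak solution via a Galerkin approximation in the space of triples $(u^1,u^2,u^3)$ satisfying the linear junction constraint $\gamma^1 u^1 + \gamma^2 u^2 + \gamma^3 u^3 = 0$ on $\Sigma_\ast$. Testing the weighted combination $\sum_i \gamma^i\beta^i u^i$ and integrating by parts on each $\Gamma^i_\ast$ produces boundary integrals over $\Sigma_\ast$ that reorganize, via the remaining two conditions $\sum_j \mathcal{B}^{ij} u^j = b^i$, into a controllable form. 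This mirrors the first variation argument of Section~\ref{sec:intro} and yields an $L^2$-energy estimate together with weak existence.

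\textbf{Schauder estimates via localization and Lopatinskii-Shapiro.} To upgrade regularity to $C^{2+\alpha,1+\alpha/2}$, I first drop the nonlocal term and cover each $\Gamma^i_\ast$ by coordinate patches. Interior patches, and patches meeting $\partial\Gamma^i_\ast$ away from $\Sigma_\ast$, reduce to standard parabolic Schauder theory. Patches meeting $\Sigma_\ast$ are flattened via a common parametrization to three copies of $\mathbb{R}^n_+$ glued along $\{x_n=0\}$, on which the principal part becomes a constant-coefficient second-order parabolic system coupled through the boundary operators $\mathcal{B}^{ij}$. The decisive step is verifying the Lopatinskii-Shapiro complementing condition for this model: for any frequency $(\lambda,\xi')$ with $\mathrm{Re}\,\lambda \geq 0$ and $|\lambda|+|\xi'|\neq 0$, the only bounded solution of the homogeneous model problem must vanish. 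I verify this, as advertised in the introduction, by an energy argument directly on the half-space model, testing against $\gamma^i\beta^i u^i$ and closing up the boundary terms via the three junction conditions. Solonnikov-type Schauder estimates then yield a local bound with constant independent of the time length $\delta_0$, and a partition-of-unity reassembly delivers a global estimate for the problem without the nonlocal term.

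\textbf{Absorbing the nonlocal term.} The zero-order nonlocal term $\zeta^i(\mathcal{T}(\bmu\circ\pr^i))^i$ is moved to the right-hand side; its $C^{\alpha,\alpha/2}$-norm is controlled by $\|\bmu\|_{C^{2+\alpha,1+\alpha/2}}$ via the trace theorem, and since $u^i(\cdot,0)=0$ an interpolation inequality yields a factor $\delta_0^{\eta}$ for some $\eta>0$. For $\delta_0$ sufficiently small this makes the term a contractive perturbation of the Schauder estimate, so a fixed-point argument in $\mathcal{X}_{\delta_0}$ produces the unique solution with the asserted estimate whose constant is independent of $\delta_0$.

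\textbf{Main obstacle.} The conceptual core is the Lopatinskii-Shapiro verification: the triple-junction boundary conditions couple the three unknowns asymmetrically through the angle cosines $c^i$, sines $s^i$ and normal curvatures $\kappa^i_\ast$, and a direct algebraic analysis of the resulting $3\times 3$ boundary symbol is unwieldy. The energy-based circumvention, natural in hindsight given the variational origin of the problem, still requires a careful sign analysis of the boundary contributions on the half-space model that essentially replays the first-variation computation in a parameter-dependent setting, and is where the bulk of the technical work must be invested.
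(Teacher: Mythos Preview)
Your approach is essentially that of the paper: Galerkin weak solution in the constrained space $\{\sum_i\gamma^i u^i=0\text{ on }\Sigma_\ast\}$, localized Schauder theory with Lopatinskii--Shapiro verified at the junction, Solonnikov estimates reassembled by partition of unity, and a small-time perturbation to absorb the nonlocal zero-order term. Two concrete points deserve correction.

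First, the weight in your test function is wrong. Since $\mathcal{A}^i=\beta^i(\Delta_{\Gamma^i_\ast}+|\Pi^i_\ast|^2)$, multiplying the $i$-th equation by $\gamma^i\beta^i u^i$ produces a boundary flux $\sum_i\gamma^i(\beta^i)^2\,\partial_{\nu^i_\ast}u^i\cdot u^i$ on $\Sigma_\ast$, which does \emph{not} close against the constraint $\sum_i\gamma^i u^i=0$ together with the principal boundary relations $\partial_{\nu^1_\ast}u^1=\partial_{\nu^2_\ast}u^2=\partial_{\nu^3_\ast}u^3$. The correct weight is $\gamma^i/\beta^i$: then the boundary term becomes $\sum_i\gamma^i\,\partial_{\nu^i_\ast}u^i\cdot u^i$, which in the principal model collapses to $(\partial_\nu u^1)\sum_i\gamma^i u^i=0$. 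The same correction is needed in your energy verification of Lopatinskii--Shapiro on the half-space model; with $\gamma^i\beta^i$ the cancellation simply fails.

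Second, you overstate the difficulty of the algebraic Lopatinskii--Shapiro check. The normal curvatures $\kappa^i_\ast$ and the angle data $c^i,s^i$ enter $\mathcal{B}^{ij}$ only at zero order in $u$ and hence drop out of the principal boundary symbol; what remains is just $\gamma^1u^1+\gamma^2u^2+\gamma^3u^3$, $\partial_n u^1-\partial_n u^2$, $\partial_n u^2-\partial_n u^3$. The resulting $3\times3$ determinant is $\gamma^1\tau^2\tau^3+\gamma^2\tau^1\tau^3+\gamma^3\tau^1\tau^2$, and a short phase argument on the stable roots $\tau^j$ shows it is nonzero. The paper in fact carries out both this direct computation and the energy alternative you describe.
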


First, we will consider problem \eqref{L_eq_1} without the nonlocal term 
$\zeta^i (\mathcal{T}(u \circ \pr^i))^i$ and at the end we will include it
with the help of a perturbation argument. 

In order to apply the $C^\alpha$-regularity theory of Solonnikov \cite{Sol} 
we need to show that the boundary value problem \eqref{L_eq_1} fulfills the 
Lopantinskii-Shapiro compatibility conditions, see Chapter I of \cite{Sol},
where the conditions are stated. 
To this end we have to rewrite problem \eqref{L_eq_1} with the help of local
coordinates and a partition of unity as a problem in Euclidean space. We will
do this locally around the triple junction with specifically chosen local 
coordinates, since the compatibility conditions have to be checked just there.
Locally around a point 
$\sigma \in \Sigma_\ast$ we choose for each of the surfaces $\Gamma^i_\ast$, 
$i=1,2,3$, local coordinates $(x_1,\ldots,x_n)$ such that 
$(x_1,\ldots,x_{n-1})$ parametrize $\Sigma_\ast$ and such that the metric 
tensors fulfill
\begin{align}
 \left(g^i\right)_{nn} = 1 \,, \quad 
 \left(g^i\right)_{jn} = 0 \; \mbox{ for } \; j=1,\ldots,n-1 \,. 
 \label{localcoord}
\end{align}
This is possible by choosing the $n$'th coordinate as the distance from 
the $(n-1)$-dimensional surface $\Sigma_\ast$. 

Denoting the representation of the $u^j$, $j=1,2,3$, in local coordinates 
as $\hat{u}^j$, $j=1,2,3$, the principal parts of the boundary operators 
in \eqref{L_eq_1}  can be written as 
\begin{align*}
 \sum_{j=1}^3 \mathcal{B}_0^{ij} \hat{u}_j = 
  \left\{ \begin{array}{ll}
            \gamma^1 \hat{u}^1 + \gamma^2 \hat{u}^2 + \gamma^3 \hat{u}^3\,,
            & i=1 \,, \\
            \partial_n \hat{u}^1 - \partial_n \hat{u}^2\,, & i=2 \,, \\
            \partial_n \hat{u}^2 - \partial_n \hat{u}^3\,, & i=3 \,.
           \end{array}
  \right. 
\end{align*}
The principal part of the parabolic differential operator takes the form
$$
 \mathcal{L}_0(\partial_t,\nabla) = \left(l_0^{ij}\right)_{i,j=1,2,3}
$$
with 
$$
 l_0^{ij} = \left\{\begin{array}{cl}
 0, & i \neq j, \\[0.15cm]%
 \dis \partial_t - \sum_{k,l=1}^{n} \beta^i g^{i,kl}  \partial_k \partial_l,
 & i=j.
\end{array}\right.
$$
For $\xi \in \mathbb{R}^n$ and $p \in \mathbb{C}$ with positive real part 
we now define 
\begin{align*}
 \boldsymbol{L} := \det \mathcal{L}_0(p,\boldsymbol{i} \xi) 
  = \prod_{i=1}^3 \Bigl( 
                   p + \sum_{k,l=1}^{n} \beta^i g^{i,kl} \xi_k \xi_l 
                  \Bigr)
\end{align*}
and 
\begin{align*}
 \widehat{\mathcal{L}}_0 = \bigl( \widehat{l}_0^{\;ij} \bigr)_{i,j=1,2,3} 
  = \boldsymbol{L} \left( \mathcal{L}_0 \right)^{-1} \,.
\end{align*}
\begin{lemma} \label{lem:LopShapCond}
The operators $( \widehat{\mathcal{L}}_0 , \mathcal{B}_0 )$ 
fulfill the Lopantinskii-Shapiro conditions.
\end{lemma}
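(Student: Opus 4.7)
The plan is to verify, in the sense of Solonnikov \cite{Sol}, that after freezing coefficients at a point $\sigma\in\Sigma_\ast$, partial Fourier-transforming in the tangential variables $(x_1,\ldots,x_{n-1})$ with dual $\xi\in\mathbb{R}^{n-1}$ and Laplace-transforming in $t$ with dual $p\in\mathbb{C}$, $\mathrm{Re}(p)\ge 0$, $(p,\xi)\ne(0,0)$, the resulting ODE system on the three half-lines $\{x_n>0\}$ together with the homogeneous boundary conditions at $x_n=0$ admits only the trivial exponentially decaying solution. In the adapted local coordinates already fixed (where $(g^i)_{nn}=1$ and $(g^i)_{jn}=0$), this reduces to the scalar ODEs
\begin{equation*}
-\beta^i (U^i)''(x_n) + \bigl(p+\beta^i\lambda^i(\xi)\bigr)U^i(x_n)=0\quad\text{on } x_n>0,\ i=1,2,3,
\end{equation*}
where $\lambda^i(\xi):=\sum_{k,l=1}^{n-1}g^{i,kl}(\sigma)\xi_k\xi_l$ is non-negative and strictly positive for $\xi\ne 0$ by positive-definiteness of the metric. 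The unique decaying solutions are $U^i(x_n)=c^i e^{-\omega^i x_n}$, where $\omega^i:=\sqrt{(p+\beta^i\lambda^i)/\beta^i}$ with strictly positive real part, well-defined under the parameter restrictions.

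The second step is to translate $\mathcal{B}_0 U=0$ at $x_n=0$ into the algebraic system
\begin{equation*}
\gamma^1 c^1+\gamma^2 c^2+\gamma^3 c^3=0,\qquad \omega^1 c^1=\omega^2 c^2=\omega^3 c^3,
\end{equation*}
and to show $c^1=c^2=c^3=0$. I would follow the energy approach announced in the introduction: multiply the $i$-th ODE by $(\gamma^i/\beta^i)\overline{U^i}$, integrate over $[0,\infty)$, integrate by parts, and sum over $i$ to obtain
\begin{equation*}
\sum_{i=1}^{3}\int_{0}^{\infty}\!\Bigl\{\tfrac{\gamma^i p}{\beta^i}|U^i|^2+\gamma^i\lambda^i|U^i|^2+\gamma^i|(U^i)'|^2\Bigr\}dx_n+\sum_{i=1}^3 \gamma^i (U^i)'(0)\,\overline{U^i(0)}=0.
\end{equation*}
The flux-matching boundary conditions make $(U^i)'(0)=-\omega^i c^i$ independent of $i$ (call this common value $q$), so the boundary sum collapses to $q\sum_i\gamma^i\overline{U^i(0)}$, which vanishes by the attachment condition. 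Taking real parts, every remaining integrand is non-negative, and at least one is strictly positive whenever $(p,\xi)\ne(0,0)$ --- the $\lambda^i$-term if $\xi\ne 0$, the $\mathrm{Re}(p)$-term if $\mathrm{Re}(p)>0$, and the $|(U^i)'|^2$-term in the remaining case of purely imaginary $p\ne 0$ and $\xi=0$. This forces each $U^i\equiv 0$.

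The main obstacle is choosing the correct weighting: the naive multipliers $\overline{U^i}$ or $\gamma^i\overline{U^i}$ yield a boundary sum weighted by $\beta^i$ which cannot be tamed using only $\sum_i\gamma^i c^i=0$ and the flux-matching. The weight $\gamma^i/\beta^i$ is precisely the one making the two types of boundary condition dual in the energy identity. As a safety check, the direct algebraic version also works: from $\omega^i c^i=q$ one has $c^i=q/\omega^i$, and substituting into $\sum_i\gamma^i c^i=0$ gives $q\,(\gamma^1/\omega^1+\gamma^2/\omega^2+\gamma^3/\omega^3)=0$; since each $1/\omega^i$ has strictly positive real part, the bracket is nonzero, so $q=0$ and hence all $c^i=0$.
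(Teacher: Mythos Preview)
Your proposal is correct. The paper actually contains both of your arguments, but with the emphasis reversed: its main proof of the lemma is the algebraic/determinant route --- computing the roots $\tau^i$ with positive imaginary part, writing out the $3\times 3$ matrix coming from the boundary symbols, and showing that the determinant $\gamma^1\tau^2\tau^3+\gamma^2\tau^1\tau^3+\gamma^3\tau^1\tau^2$ is nonzero because each summand has strictly negative real part --- which is exactly (up to division by $\tau^1\tau^2\tau^3$ and the substitution $\tau^i=i\omega^i$) your ``safety check'' $\sum_i\gamma^i/\omega^i\ne 0$. Your primary energy argument, with the weights $\gamma^i/\beta^i$ chosen so that the boundary contributions collapse via the two boundary conditions, appears in the paper only afterward, in Remark~\ref{rem:LopShap}, phrased in the ODE formulation of Latushkin--Pr\"uss--Schnaubelt. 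The energy route is more self-contained and transparently handles the closed half-plane $\mathrm{Re}(p)\ge 0$ (you explicitly dispose of the purely imaginary case); the algebraic route matches Solonnikov's original formulation more literally and makes the role of the determinant explicit. Either suffices.
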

\begin{proof}
For the coefficients of $\widehat{\mathcal{L}}_0$ we calculate
$$
\widehat{l}_0^{\;ij}=\left\{\begin{array}{cl}
  0, & i \neq j, \\[0.15cm]%
  \dis \prod_{\stackrel{j=1}{j \neq i}}^3 
    \Bigl( p + \sum_{k,l=1}^{n} \beta^j g^{j,kl} \xi_k \xi_l \Bigr)\,,
  & i=j.
\end{array}\right.
$$
We now set $\xi = \xi' + \tau e_n$ with $\xi'_n = 0$, $\tau \in \mathbb{R}$ 
and $e_n = (0,\ldots,0,1)$. Let $\tau^i(p,\xi')$, $i=1,2,3$, be those roots 
of $\boldsymbol{L}(p,\boldsymbol{i} (\xi' + \tau e_n))$, which have positive 
imaginary part. The fact that there are exact three roots with positive 
imaginary part follows from the fact that the system in the first line 
of \eqref{L_eq_1} is parabolic. Now we define
$$
  \widehat{p}^{\;i} := p + \sum_{k,l=1}^{n-1} \beta^i g^{i,kl} \xi_k \xi_l \,,
$$
where $p$ is assumed to have a positive real part. We choose polar coordinates
\begin{align*}
  \widehat{p}^{\;i} = |\widehat{p}^{\;i}| e^{\boldsymbol{i} \,\phi^i} \,.
\end{align*}
The fact that $p$ has positive real part and the fact that 
$\left(g^{i,kl}\right)_{k,l=1,\ldots,n-1}$ is positive definite imply 
that $\phi^i \in \bigl(-\pi/2,\pi/2\bigr)$. Hence we compute
\begin{align}
  \tau^i(p,\xi') = \sqrt{|\widehat{p}^{\;i}|} \sqrt{\frac{1}{\beta^i}} 
  e^{\boldsymbol{i} \frac{1}{2} (\phi^i + \pi)} \,. \label{equ:tau}
\end{align}
The Lopantinskii-Shapiro conditions now require that the rows of 
the matrix $\mathcal{B}_0 \widehat{\mathcal{L}}_0$ are linearly independent 
for all $p \in \mathbb{C}$ with $Re \, p > 0$ modulo the polynomial
\begin{align*}
  M^+(p,\xi',\tau) = \prod_{i=1}^3 \left\{ \tau - \tau^i(p,\xi') \right\} \,.
\end{align*}
This can only be true if 
\begin{align*}
  \sum_{i=1}^3 \omega^i B_0^{ij}(\tau) = 0 \quad 
    \mbox{mod } \; \tau - \tau^j(p,\xi') \,, \; j=1,2,3 
\end{align*}
has a nontrivial solution $(\omega^1,\omega^2,\omega^3)$, where 
$B_0^{ij}(\tau) = \mathcal{B}_0^{ij}(\tau e_n)$. 
Hence we need to decide whether the set of equations 
 \begin{align}
  \sum_{i=1}^3 \omega^i B_0^{ij}(\tau^i(p,\xi')) = 0 \label{equboundlincombi}
 \end{align}
has a nontrivial solution. Using the definition of the $B_0^{ij}$ we finally 
need to decide whether the determinant of the matrix 
\begin{align*}
  \left(
   \begin{array}{ccc}
     \gamma^1 & \tau^1 & 0 \\
     \gamma^2 & -\tau^2 & \tau^2 \\
     \gamma^3 & 0 & -\tau^3
   \end{array}
  \right)
\end{align*}
is singular or not. Here we abbreviated $\tau^i = \tau^i(p,\xi')$. 
The determinant is given as
\begin{align} \label{detbo}
  \gamma^1 \tau^2 \tau^3 + \gamma^2 \tau^1 \tau^3 + \gamma^3 \tau^1 \tau^2 \,.
\end{align} 
In polar coordinates the angle of $\tau^i \tau^j$ is given as 
$(\phi^i + \phi^j)/2 + \pi$.
Since $\phi^i$, $\phi^j \in \bigl(-\pi/2,\pi/2\bigr)$ 
we obtain that $\tau^i \tau^j$ has negative real part. Hence 
$\gamma^1 \tau^2 \tau^3 + \gamma^2 \tau^1 \tau^3 + \gamma^3 \tau^1 \tau^2$ 
is the sum of three summands which all have negative real part. Hence 
the determinant is non-zero and we have shown that the Lopantinskii-Shapiro 
conditions hold. 
\end{proof}

\begin{remark}\label{rem:LopShap}
In Latushkin, Pr\"uss and Schnaubelt \cite{LPS} the Lopatinskii-Shapiro 
condition is formulated as a condition for a system of ordinary differential 
equations. In our notation this reads as follows. Let $\sigma \in \Sigma_\ast$ 
and $(x_1,\ldots,x_n)$ be local coordinates (in a region $\Omega$) as in 
\eqref{localcoord} and set 
$\mathcal{A}_0(\nabla) = \operatorname{diag}((-\sum_{k,l=1}^n \beta^i g^{i,kl} 
\partial_k \partial_l)_{i=1,2,3})$. 
Then the formulation in \cite{LPS} requires that for given 
$\xi \in \mathbb{R}^n$ with $\xi \perp n$ and 
$\lambda \in \{z \in \mathbb{C} \,|\, Re(z) \geq 0\}$
with $(\lambda,\xi) \neq (0,0)$ the function $\varphi = 0$ is the only 
bounded solution in $C_0(\mathbb{R}_+;\mathbb{C}^3)$ of the ODE-system 
\begin{align}
  &\lambda \varphi(y) + \mathcal{A}_0(i \xi + n(x) \partial_y) \varphi(y) 
   = 0 \,, \quad y>0 \,, \label{equivLS1} \\
  &\mathcal{B}_0(i\xi + n(x) \partial_y) \varphi(0) = 0 \,. \label{equivLS2}
\end{align}
The equivalence of the formulation in \cite{LPS} to the algebraic formulation 
in Solonnikov \cite{Sol} can be found in Eidelman and Zhitarashu 
\cite[Chap. I.2]{EZ98}.

By choosing for simplicity as above $\xi = (\xi',0)$ and $n(x) = e_n$ 
the equations~\eqref{equivLS1} and~\eqref{equivLS2} reduces in our case to
\begin{alignat}{2}
  &\lambda \varphi^j + |\xi'|^2 \beta^j \varphi^j - \beta^j (\varphi^j)'' = 0, \quad 
  && y>0 \,, \label{equ:easy1} \\
  &\gamma^1 \varphi^1 + \gamma^2 \varphi^2 + \gamma^3 \varphi^3 = 0,
  && y=0 \,, \label{equ:easy2} \\
  &(\varphi^1)' = (\varphi^2)' = (\varphi^3)',
  && y=0 \,. \label{equ:easy3}
\end{alignat}              
These equations can be treated with an energy method to show that 
a solution must be zero. To this end we test line \eqref{equ:easy1} with
$\gamma^j \overline{\varphi}^j/\beta^j$ and sum over $j=1,2,3$ to get
 \begin{align*}
  0 &= \sum_{j=1}^3 (\lambda + \beta^j |\xi'|^2) \frac{\gamma^j}{\beta^j} 
       \int_0^\infty |\varphi^j|^2 \,dy 
       - \sum_{j=1}^3 \gamma^j \int_0^\infty (\varphi^j)'' \, \overline{\varphi}^j \,dy \\
    &= \sum_{j=1}^3 (\lambda + \beta^j |\xi'|^2) \frac{\gamma^j}{\beta^j} 
       \int_0^\infty |\varphi^j|^2 \,dy 
       + \sum_{j=1}^3 \gamma^j \int_0^\infty |(\varphi^j)'|^2 \,dy 
       - \sum_{j=1}^3 \gamma^j (\varphi^j)'(0) \, \overline{\varphi}^j(0) \\
    &= \sum_{j=1}^3 (\lambda + \beta^j |\xi'|^2) \frac{\gamma^j}{\beta^j} 
       \int_0^\infty |\varphi^j|^2 \,dy 
       + \sum_{j=1}^3 \gamma^j \int_0^\infty |(\varphi^j)'|^2 \,dy 
       - (\varphi^1)'(0) \sum_{j=1}^3 \gamma^j \overline{\varphi}^j(0) \,.
 \end{align*}
In the last line we used the boundary condition \eqref{equ:easy3}. 
Finally with \eqref{equ:easy2} we see that the last term vanishes and 
that therefore $(\varphi^1,\varphi^2,\varphi^3)=0$.
\end{remark}



\noindent
{\it Proof of Theorem \ref{thm:L_exist}.} 
First we construct a weak solution of problem \eqref{L_eq_1} without 
the nonlocal term. 
In order to apply an energy method we modify the equations into
\begin{equation}
\left\{\begin{array}{ll} \label{L_sys}
\dfrac{\gamma^i}{\beta^i} \partial_t u^i 
= \gamma^i (\Delta_{\G^i_\ast} u^i 
  + |\Pi^i_\ast|^2 u^i) + \dfrac{\gamma^i}{\beta^i} f^i  
& \mbox{on } \, \G^i_\ast \times[0,T] \,, \\
\dis\sum_{j=1}^3{\cal B}^{ij}u^j = b^i 
& \mbox{on } \, \Sigma_\ast\times[0,T] \,, \\ [0.6cm]
u^i\big|_{t=0} = 0 
& \mbox{on } \, \G^i_\ast \,.
\end{array}\right.
\end{equation}
In this way we are able to choose the weak solution $\bmu=(u^1,u^2,u^3)$ and 
the test functions $\boldsymbol{\xi} = (\xi^1,\xi^2,\xi^3)$ in the same space. 
Now we introduce the function spaces
\begin{align*}
&{\cal L} 
:= L^2(\Gamma_\ast^1) \times L^2(\Gamma_\ast^2) \times 
L^2(\Gamma_\ast^3), \quad
{\cal L}_b 
:=L^2(\partial\Gamma_\ast^1) \times L^2(\partial\Gamma_\ast^2) 
\times L^2(\partial\Gamma_\ast^3) \;(= \left(L^2(\Sigma_\ast)\right)^3 ), \\
&{\cal H}^1 
:= H^1(\Gamma_\ast^1) \times H^1(\Gamma_\ast^2) \times 
H^1(\Gamma_\ast^3), \quad
{\cal E}
:=\{\bmu\in{\cal H}^1 \,|\, \gamma^1 u^1+\gamma^2 u^2+\gamma^3 u^3 = 0 \, 
\mbox{ a.e. on } \, \Sigma_\ast \}.
\end{align*}
Also, we introduce the time-dependent bilinear form 
\begin{align*}
B[\bmu,\bmxi;t]
:=&\,\sum_{i=1}^3\gamma^i\biggl\{ 
\int_{\Gamma_\ast^i}\la\nabla_{\Gamma_\ast^i}u^i,\nabla_{\Gamma_\ast^i}\xi^i\ra
\,d\mathcal{H}^{n}
- \int_{\Gamma_\ast^i}|\Pi_\ast^i|^2u^i\xi^i \,d\mathcal{H}^{n} 
+ \int_{\Sigma_\ast} \kappa_\ast^i \left( \mathcal{T} u \right)^i \xi^i 
\,d\mathcal{H}^{n-1} \biggr\}
\end{align*}
for $\bmu(\cdot,t),\bmxi(\cdot,t)\in{\cal E}$.
The weak formulation then reads as follows. 
Find $\bmu \in L^2(0,T;\mathcal{E})$ with 
$\partial_t \bmu \in L^2(0,T;\left(\mathcal{H}^1\right)^{-1})$ such that
\begin{align} \label{weakform}
 \la \partial_t \bmu , \boldsymbol{\xi} \ra_{dual} 
 + B[\bmu,\boldsymbol{\xi};t] 
 = \left( \boldsymbol{f} , \boldsymbol{\xi} \right)_{\mathcal{L}} 
   + b(\boldsymbol{\xi};t)
 & \quad \mbox{for all}\,\ \boldsymbol{\xi} \in \mathcal{E}\,\ 
   \mbox{and a.e. in $t$} ,
\end{align}
where $\left(\mathcal{H}^1\right)^{-1}$ is the dual space to $\mathcal{H}^1$ 
and 
\begin{equation}\label{norm_L2}
\la \partial_t \bmu , \boldsymbol{\xi} \ra_{dual} 
= \sum_{i=1}^3 \frac{\gamma^i}{\beta^i} \la \partial_t u^i , \xi^i \ra_{dual}, 
\quad
(\boldsymbol{f} , \boldsymbol{\xi})_{\mathcal{L}} 
= \sum_{i=1}^3 \frac{\gamma^i}{\beta^i} (f^i , \xi^i)_{L^2(\G^i_\ast)}
\end{equation}
are scaled versions of the corresponding duality pairing and 
 inner product. The time-dependent linear form $b$ is given through 
$$
b(\boldsymbol{\xi};t) = \int_{\Sigma_\ast} \left(\gamma^1 (b^2 + b^3) \xi^1 
+ \gamma^2 b^3 \xi^2 \right)\,d\mathcal{H}^{n-1}
$$
and consists of terms which appear formally due to the rewriting of 
$\int_{\Sigma_\ast}\gamma^i\partial_{\nu^i_\ast} u^i \xi^i
\,d\mathcal{H}^{n-1}$ to make use of $\sum_{i=1}^3 \gamma^i \xi^i = 0$.
That this weak formulation for smooth solutions is equivalent to the strong 
formulation, can be checked by a straightforward computation using integration 
by parts and the restriction $\boldsymbol{\xi} \in \mathcal{E}$.

We want to apply the Galerkin method and therefore assume that 
$\bmw_k=\bmw_k(\s)$ for $k=1,2,\ldots$ are smooth functions such that 
$\{\bmw_k\}_{k=1}^{\infty}$ is an orthonormal basis in ${\cal L}$. 
Indeed, we can take such $\{\bmw_k\}_{k=1}^{\infty}$ considering 
eigenfunctions of the eigenvalue problem
\begin{align*}
\left\{ \begin{array}{ll}
-\gamma^i \Delta_{\G^i_\ast} w^i 
= \lambda\,\dfrac{\gamma^i}{\beta^i} w^i 
  & \mbox{on } \; \G_\ast^i \,, \; i=1,2,3 \,, \\
\gamma^1 w^1+\gamma^2 w^2+\gamma^3 w^3 = 0 
  & \mbox{on } \; \Sigma_\ast \,, \\
\la\nabla_{\Gamma_\ast^1}w^1,\nu_\ast^1\ra  
= \la\nabla_{\Gamma_\ast^2}w^2,\nu_\ast^2\ra
= \la\nabla_{\Gamma_\ast^3}w^3,\nu_\ast^3\ra 
  & \mbox{on }  \; \Sigma_\ast \,.
\end{array}\right.
\end{align*}
This follows similar as in Gilbarg and Trudinger \cite{GT} by considering 
the quadratic form
\begin{align*}
 Q(\bmu,\bmu) = \sum_{i=1}^3 \gamma^i 
 \int_{\G^i_\ast} \la\nabla_{\G^i_\ast} u^i, \nabla_{\G^i_\ast} u^i\ra 
 \, d\mathcal{H}^{n}
\end{align*}
on $\mathcal{E}$ and the norm of ${\cal L}$ as in (\ref{norm_L2}). 
In addition the eigenfunctions are orthogonal with respect 
to the quadratic form $Q$. 
We remark, that since the boundary conditions fulfill the Lopantinskii-Shapiro conditions, 
one can also derive regularity results for the eigenfunctions 
$\{\bmw_k\}_{k=1}^{\infty}$. 

Now fix a positive integer $m \in \mathbb{N}$ and look for 
$\bmu_m : [0,T] \to {\cal E}$ of the form
\begin{equation} \label{series}
\bmu_m(t) = \sum_{k=1}^md_m^k(t)\bmw_k.
\end{equation}
Here the coefficients $d_m^k(t)$ for $k=1,2,\ldots,m$ have to be chosen such that 
\begin{align}
 d_m^k(0) 
 &= 0 \,, \label{d_1} \\
 (\partial_t\bmu_m,\bmw_k)_{{\cal L}}+B[\bmu_m,\bmw_k;t] 
 &= (\boldsymbol{f} ,\bmw_k)_{{\cal L}} +  b(\bmw_k;t) \,, \label{d_2}
\end{align}
where $k=1,\ldots,m$ and the second line has to be understood pointwise 
in $t$. 
Note that due to $\bmw_k \in \mathcal{E}$ a function $\bmu_m$ of 
the form \eqref{series} satisfies
$$
\gamma^1u_m^1(\s,t) + \gamma^2u_m^2(\s,t) + \gamma^3u_m^3(\s,t)=0
\quad \mbox{for } \, \s \in \Sigma_\ast \,.
$$
With the help of theory for linear systems of ordinary differential equations 
we find $(d_m^1,\ldots,d_m^m)$ as a unique solution of 
$$
(d_m^k)'(t)+\sum_{l=1}^mB[\bmw_l,\bmw_k;t]d_m^l(t)
=(\boldsymbol{f}(\cdot\,,t),\bmw_k)_{{\cal L}} + b(\bmw_k;t)
$$
with the initial data (\ref{d_1}), so that $\bmu_m$ of the form 
(\ref{series}) satisfies (\ref{d_1}) and (\ref{d_2}) for each 
$m\in{\mathbb N}$. 

Since the trace operator is compact one can use a contradiction 
argument similar as in the proof of the Ehrling Lemma in order 
to derive the inequality
\begin{align*}
\|\bmu\|_{{\cal L}_b}^2
\le\epsi\|\nabla\bmu\|_{{\cal L}}^2+C_\epsi\|\bmu\|_{{\cal L}}^2
\quad (\bmu\in{\cal E})
\end{align*}
for each $\epsi>0$ and a constant $C_\epsi>0$. 
Using this inequality one can argue similar as in the proof of 
Evans \cite[Sect. 7.1.2, Th. 2]{E} and obtain the energy estimate 
\begin{equation}
\sup_{0\le t\le T}\|\bmu_m(t)\|_{{\cal L}}
+\|\bmu_m\|_{L^2(0,T;{\cal E})}
+\|\partial_t\bmu_m\|_{L^2(0,T;\left(\mathcal{H}^1\right)^{-1})}
\le C(\|\bmf\|_{L^2(0,T;{\cal L})})
+\|\bmb\|_{L^2(0,T;{\cal L}_b)})
\label{energy_es}
\end{equation}
for $m\in{\mathbb N}$ and a constant $C>0$. Using this we can prove 
the existence and uniqueness of a weak solution with standard arguments, 
which can be found for example in Evans \cite[p.356--358]{E}.

Let us derive Schauder estimates for solutions of problem \eqref{L_sys}. 
Here we consider the H\"older estimate only near the triple junction and 
just remark that away from the triple junction the result follows 
in a standard way after localization.

Let us introduce some notation. Locally around a point $\sigma \in \Sigma_\ast$ we choose parametrizations which flatten the boundary in the following way. 
We pick a sequence $0 < r_1 < r_2 < r_3 < r_4$ and with 
$Q_l := B_{r_l}(y) \cap \{x \in \mathbb{R}^n \,|\, x_n \geq 0\}$
for $l=1,2,3,4$, where $y \in \mathbb{R}^n$ is such that $y_n=0$, 
we let $F^i:Q_4 \to \G^i_\ast$, $i=1,2,3$, be local parametrizations with 
$F^i(y) = \sigma$ and $\left. F^i \right|_{\{x_n=0\}} \subset \Sigma_\ast$. 
Additionally for a given $t_0 \geq 0$ we choose a sequence 
$0 < \delta_1 < \delta_2 < \delta_3 < \delta_4$ and set 
$\Lambda_l := (t_0 - \delta_l , t_0 + \delta_l) \cap 
\{t \in \mathbb{R} \,|\, t \geq 0\}$ for $l=1,2,3,4$. 

With the help of a cut-off function we will formulate problem \eqref{L_sys} 
for the representations $\hat{u}^j = u^j \circ F^j$ in $Q_4 \times \Lambda_4$ 
in Euclidean space. To preserve the structure of the problem and to keep 
the notation simple, we will identify the notation of the function $u^j$ with 
its representation in local coordinates. In the next steps the sets $Q_l \times \Lambda_l$ 
will be successively reduced to achieve finally the stated H\"older estimate 
in $Q_1 \times \Lambda_1$. We will need the following notation for parts 
of the boundary of $Q_l$:
\begin{align*}
C_l := \partial Q_l \cap \{x \in \mathbb{R}^n \,|\, x_n > 0 \} 
\quad \mbox{and} \quad 
S_l := \partial Q_l \backslash C_l \,.
\end{align*}
Now let $\eta$ be a cut-off function satisfying
\begin{align*}
\eta\in C^{\infty}_0(Q_4 \times \Lambda_4) \,, \quad 0 \le \eta \le 1, \quad 
\eta \equiv 1 \,\ \mbox{on}\,\ Q_3 \times \Lambda_3 \,.
\end{align*}
We remark that due to the fact that $Q_4$ is not open, the values $\eta(x,t)$ 
for $x \in S_4$ do not necessarily vanish. The same holds true for 
$\eta(x,0)$, if $\Lambda_4$ is not open. 

Now set $v^i=\eta u^i$, where $(u^1,u^2,u^3)$ is a weak solution of 
\eqref{L_sys} and note that we do not distinguish between the functions 
$u^i$ and its representations. Then we have in a weak sense
\begin{align*}
& \partial_t v^i 
= \eta \partial_t u^i + \partial_t \eta \,u^i, \quad 
\Delta_{\Gamma_\ast^i}v^i 
= \eta\Delta_{\Gamma_\ast^i}u^i 
  + 2 \la\nabla_{\Gamma_\ast^i}\eta,\nabla_{\Gamma_\ast^i}u^i\ra
  +(\Delta_{\Gamma_\ast^i}\eta)u^i, \\
& \la\nabla_{\Gamma_\ast^i}v^i,\nu_\ast^i\ra 
= \eta\la\nabla_{\Gamma_\ast^i}u^i,\nu_\ast^i\ra
  +\la\nabla_{\Gamma_\ast^i}\eta,\nu_\ast^i\ra u^i.
\end{align*}
Since $(u^1,u^2,u^3)$ is a weak solution of \eqref{L_sys}, we deduce that 
$(v^1,v^2,v^3)$ is a weak solution of
\begin{equation} \label{L_sys_v}
 \left\{ 
  \begin{array}{ll}
   \dfrac{\gamma^i}{\beta^i} \partial_t v^i
   =\gamma^i(\Delta_{\Gamma_\ast^i}v^i+|\Pi_\ast^i|^2 v^i)+\tilde{f}^i(x,t), 
     & (x,t) \in Q_4 \times \Lambda_4 ,   \\[0.15cm]
   \gamma^1 v^1+\gamma^2 v^2+\gamma^3 v^3 = 0,
     & (x,t)\in S_4 \times \Lambda_4, \\[0.1cm]
   \la\nabla_{\Gamma_\ast^1}v^1,\nu_\ast^1\ra 
   + \kappa_\ast^1 (\mathcal{T} \bmv)^1 
   - \la\nabla_{\Gamma_\ast^2}v^2,\nu_\ast^2\ra 
   - \kappa_\ast^2 (\mathcal{T} \bmv)^2 
   =\tilde{b}^2(x,t), 
     & (x,t)\in S_4 \times \Lambda_4, \\[0.25cm]
   \la\nabla_{\Gamma_\ast^2}v^2,\nu_\ast^2\ra 
   + \kappa_\ast^2 (\mathcal{T} \bmv)^2 
   - \la\nabla_{\Gamma_\ast^3}v^3,\nu_\ast^3\ra 
   - \kappa_\ast^3 (\mathcal{T} \bmv)^3 
   =\tilde{b}^3(x,t), 
     & (x,t)\in S_4 \times \Lambda_4, \\[0.25cm]
   v^i(x,t) = 0 ,  & (x,t)\in C_4 \times \Lambda_4, \\[0.15cm]
   v^i(x,0) = 0 ,  &  x\in Q_4 ,
  \end{array}
 \right.
\end{equation}
where $i=1,2,3$ and 
\begin{align*}
\tilde{f}^i 
 &= \dfrac{\gamma^i}{\beta^i} \eta^i f^i 
  + \dfrac{\gamma^i}{\beta^i} \partial_t \eta\, u^i
  -2\gamma^i\la\nabla_{\Gamma_\ast^i}\eta,\nabla_{\Gamma_\ast^i}u^i\ra
  -\gamma^i(\Delta_{\Gamma_\ast^i}\eta)u^i, \\
 \tilde{b}^2 
 &= \eta^i b^2 + \la\nabla_{\Gamma_\ast^1}\eta,\nu_\ast^1\ra u^1
  - \la\nabla_{\Gamma_\ast^2}\eta,\nu_\ast^2\ra u^2, \\
 \tilde{b}^3 
 &= \eta^i b^3 + \la\nabla_{\Gamma_\ast^2}\eta,\nu_\ast^2\ra u^2
  - \la\nabla_{\Gamma_\ast^3}\eta,\nu_\ast^3\ra u^3.
\end{align*}
Note that 
\begin{align*}
&\tilde{f}^i|_{Q_3 \times \Lambda_3}
   = f^i\in C^{\alpha,\frac{\alpha}2}(Q_3\times\Lambda_3), \quad
 \tilde{f}^i|_{Q_4 \times \Lambda_4} \in L^2(Q_4 \times \Lambda_4), \\
&\tilde{b}^i|_{S_3 \times \Lambda_3}
= b^i \in C^{1+\alpha,\frac{1+\alpha}2}(S_3 \times \Lambda_3), \quad
\tilde{b}^i|_{S_4 \times \Lambda_4} \in L^2(S_4 \times \Lambda_4).
\end{align*}
Let $\tilde{f}^i_n$ and $\tilde{b}^i_n$ be smooth approximations of 
$\tilde{f}^i$ and $\tilde{b}^i$ satisfying
\begin{equation}
\|\tilde{f}^i_n-\tilde{f}^i\|_{L^2(Q_4 \times \Lambda_4)} \to 0, \quad
\|\tilde{b}^i_n-\tilde{b}^i\|_{L^2(S_4 \times \Lambda_4)} \to 0
\label{L2-approx}
\end{equation}
and on $Q_2 \times \Lambda_2 \subset Q_3 \times \Lambda_3$ we require 
\begin{align*}
&\|\tilde{f}^i_n\|_{C^{\alpha,\frac{\alpha}2}(Q_2 \times \Lambda_2)}
\le\|\tilde{f}^i\|_{C^{\alpha,\frac{\alpha}2}(Q_3 \times \Lambda_3)}
=\|f^i\|_{C^{\alpha,\frac{\alpha}2}(Q_3 \times \Lambda_3)}, \\
&\|\tilde{b}^i_n\|_{C^{1+\alpha,\frac{1+\alpha}2}(S_2\times\Lambda_2)}
\le\|\tilde{b}^i\|_{C^{1+\alpha,\frac{1+\alpha}2}(S_3\times\Lambda_3)}
=\|b^i\|_{C^{1+\alpha,\frac{1+\alpha}2}(S_3\times\Lambda_3)}.
\end{align*}
Replace $\tilde{f}^i$ and $\tilde{b}^i$ by $\tilde{f}^i_n$ and 
$\tilde{b}^i_n$ in \eqref{L_sys_v}, and call this problem 
\eqref{L_sys_v}$_n$. Since we checked the Lopatinskii-Shapiro conditions 
on the triple junction in Lemma \ref{lem:LopShapCond}, we can apply results 
from Solonnikov \cite[Theorem 4.9]{Sol} to get a unique solution 
$v^i_n\in C^{2+\alpha,1+\frac{\alpha}{2}}(Q_4\times\Lambda_4)$ of problem 
\eqref{L_sys_v}$_n$. Using furthermore the local estimate from 
\cite[Theorem 4.11]{Sol}, we obtain for 
$Q_1\times\Lambda_1 \subset Q_2\times\Lambda_2 \subset Q_3\times\Lambda_3$
from above that
\begin{align}
\begin{split}
\sum_{i=1}^3\|v^i_n\|_{C^{2+\alpha,1+\frac{\alpha}2}(Q_1\times\Lambda_1)}
\le {}& \, C_1\biggl\{\sum_{i=1}^3
\|\tilde{f}^i_n\|_{C^{\alpha,\frac{\alpha}2}(Q_2\times\Lambda_2)}
+\sum_{i=2}^3
\|\tilde{b}^i_n\|_{C^{1+\alpha,\frac{1+\alpha}2}(S_2\times\Lambda_2)}
\biggr\} \\
{}&+C_2\sum_{i=1}^3\|v^i_n\|_{L^2(Q_2\times\Lambda_2)} \\
\le {}&\,C_1\biggl\{\sum_{i=1}^3
\|f^i\|_{C^{\alpha,\frac{\alpha}2}(Q_3\times\Lambda_3)}
+\sum_{i=2}^3
\|b^i\|_{C^{1+\alpha,\frac{1+\alpha}2}(S_3\times\Lambda_3)}
\biggr\} \\
 {}&+C_2\sum_{i=1}^3\|v^i_n\|_{L^2(Q_4\times\Lambda_4)}.
\label{est_1}
\end{split}
\end{align}
By means of (\ref{L2-approx}) and the energy estimate \eqref{energy_es} for 
the approximated problem \eqref{L_sys_v}$_n$, we see \allowdisplaybreaks
\begin{align}
\begin{split}
\sum_{i=1}^3\|v^i_n\|_{L^2(\Lambda_4,H^1(Q_4))}
\le {}&\,C\biggl\{
\sum_{i=1}^3\|\tilde{f}^i_n\|_{L^2(Q_4\times\Lambda_4)}
+\sum_{i=2}^3\|\tilde{b}^i_n\|_{L^2(S_4\times\Lambda_4)}
\biggr\} \\
\le {}&\,\tilde{C}\biggl\{
\sum_{i=1}^3\|\tilde{f}^i\|_{L^2(Q_4\times\Lambda_4)}
+\sum_{i=2}^3\|\tilde{b}^i\|_{L^2(S_4\times\Lambda_4)}
\biggr\} \\
\le {}&\,\tilde{C}\biggl\{
\sum_{i=1}^3\|f^i\|_{L^2(Q_4\times\Lambda_4)}
+\sum_{i=2}^3\|b^i\|_{L^2(S_4\times\Lambda_4)}  \\
&\qquad +\sum_{i=1}^3\|u^i\|_{L^2(\Lambda_4,H^1(Q_4))}
\biggr\}  \\ 
\le {}&\,\tilde{C}'\biggl\{
\sum_{i=1}^3\|f^i\|_{L^2(Q_4\times\Lambda_4)}
+\sum_{i=2}^3\|b^i\|_{L^2(S_4\times\Lambda_4)}
\biggr\}. \label{est_2}
\end{split}
\end{align}
In the last inequality we used the energy estimate \eqref{energy_es}.
From the last bound we deduce the existence of a subsequence
$\{v^i_{n_\ell}\}\subset\{v^i_n\}$ and of 
$\bar{v}^i\in L^2(\Lambda_4,H^1(Q_4))$ such that 
$$
v^i_{n_\ell} \to \bar{v}^i,\,\ \mbox{weakly},
$$
and $(\bar{v}^1,\bar{v}^2,\bar{v}^3)$ is a weak solution of \eqref{L_sys_v}. 
By uniqueness of the weak solution of \eqref{L_sys_v}, 
$$
\bar{v}^i=v^i\,\ \mbox{in}\,\ Q_4\times\Lambda_4.
$$
Let us rewrite $v^i_{n_\ell}$ as $v^i_\ell$. By \eqref{est_1} and 
\eqref{est_2}, we obtain
\begin{align*}
\sum_{i=1}^3
\|v^i_\ell\|_{C^{2+\alpha,1+\frac{\alpha}2}(Q_1\times\Lambda_1)}
\le {}&\,C_1\biggl\{\sum_{i=1}^3
\|f^i\|_{C^{\alpha,\frac{\alpha}2}(Q_3\times\Lambda_3)}
+\sum_{i=2}^3
\|b^i\|_{C^{1+\alpha,\frac{1+\alpha}2}(S_3\times\Lambda_3)}
\biggr\} \\
 {}& +\tilde{C}_2\biggl\{
\sum_{i=1}^3\|f^i\|_{L^2(Q_4\times\Lambda_4)}
+\sum_{i=2}^3\|b^i\|_{L^2(S_4\times\Lambda_4)}
\biggr\} \\
\le {}&\,C\biggl\{\sum_{i=1}^3
\|f^i\|_{C^{\alpha,\frac{\alpha}2}(Q_4\times\Lambda_4)}
+\sum_{i=2}^3
\|b^i\|_{C^{1+\alpha,\frac{1+\alpha}2}(S_4\times\Lambda_4)}
\biggr\}.
\end{align*}
Then, by the theorem of Arzel\`{a}-Ascoli, there exist 
$\{v^i_{\ell_m}\}\subset\{v^i_\ell\}$ and 
$\hat{v}^i\in C^{2,1}(Q_1\times\Lambda_1)$ such that 
$$
v^i_{\ell_m}\to \hat{v}^i\,\ \mbox{in}\,\ C^{2,1}(Q_1\times\Lambda_1).
$$
Here $\hat{v}^i$ is in 
$C^{2+\alpha,1+\frac{\alpha}2}(Q_1\times\Lambda_1)$ 
because of, for example, 
$$
|\nabla_j\nabla_k\hat{v}^i(x)-\nabla_j\nabla_k\hat{v}^i(y)|
=\lim_{m\to\infty}
|\nabla_j\nabla_kv^i_{\ell_m}(x)-\nabla_j\nabla_kv^i_{\ell_m}(y)|
\le C|x-y|^{\alpha}.
$$
It follows from uniqueness of a limit and $\bar{v}^i=v^i$ in 
$Q_4\times\Lambda_4$ that 
$$
\hat{v}^i=\bar{v}^i=v^i\,\ \mbox{in}\,\ Q_1\times\Lambda_1.
$$
Since $v^i=u^i$ in $Q_1\times\Lambda_1$, $u^i$ is in 
$C^{2+\alpha,1+\frac{\alpha}2}(Q_1\times\Lambda_1)$ and satisfies
\begin{align*}
\sum_{i=1}^3
\|u^i\|_{C^{2+\alpha,1+\frac{\alpha}2}(Q_1\times\Lambda_1)}
\le {}&\,C\biggl\{\sum_{i=1}^3
\|f^i\|_{C^{\alpha,\frac{\alpha}2}(Q_4\times\Lambda_4)}
+\sum_{i=2}^3
\|b^i\|_{C^{1+\alpha,\frac{1+\alpha}2}(S_4\times\Lambda_4)}
\biggr\}.
\end{align*}
Hence we are led to the stated H\"older estimate locally around 
the triple junction $\Sigma_\ast$. By a covering argument we can enlarge 
the estimate to a neighbourhood of $\Sigma_\ast$ and then 
by an easier argument, that we omit here, we can give it for 
all hypersurfaces $\G^i_\ast$ as claimed. 

Finally, by a perturbation argument as in Baconneau and Lunardi 
\cite[Thm. 2.3]{BL}, we derive the existence of a unique solution 
and the Schauder estimate for the linearized system with nonlocal term. 
We omit the details since this part is even easier than in \cite{BL} 
due to the fact that the nonlocal terms $(\mathcal{T} (\bmu \circ \pr^i))^i$ 
do not contain derivatives of $\bmu$.

Altogether we proved Theorem \ref{thm:L_exist}.
\qed

\begin{remark} \label{rem:initialdate}
 For the case of arbitrary initial date $u^i\big|_{t=0} = \rho^i_0$, 
we have the following existence result. Let $\alpha\in(0,1)$. 
Then there exists $\delta_0>0$ such that for every 
$f^i\in C^{\alpha,\frac{\alpha}2}(Q^i_{\delta_0})$, 
$b^i\in C^{1+\alpha,\frac{1+\alpha}2}(\Sigma_\ast\times[0,\delta_0])$ 
with $b^1 \equiv 0$ and $\rho^i_0 \in C^{2+\alpha}(\G^i_\ast)$ with 
the compatibility condition
\begin{align*}
  (\gamma^1 f^1+\gamma^2 f^2+\gamma^3 f^3)\big|_{t=0} 
  = -\sum_{i=1}^3 \gamma^i \left( \mathcal{A}^i \rho^i_0 
    + \zeta^i \left( \mathcal{T} \brho_0 \right)^i \right),\,\,\ 
  b^i\big|_{t=0} = \sum_{j=1}^3 \mathcal{B}^{ij} \rho^j_0  \,\,\ 
    \mbox{on}\,\ \Sigma_\ast,
\end{align*}
the problem
\begin{equation}
  \left\{\begin{array}{ll}
          u^i_t={\cal A}^iu^i+\zeta^i({\cal T}(\bmu\circ\pr^i))^i+f^i
           & \mbox{on } \; \G^i_\ast\times[0,T], \\
          \dis\sum_{j=1}^3{\cal B}^{ij}u^j=b^i 
           & \mbox{on } \; \Sigma_\ast\times[0,T], \\[0.6cm]
          u^i\big|_{t=0}=\rho^i_0 & \mbox{on } \; \G^i_\ast
         \end{array}
   \right.
  \label{L_eq_1withinitdate}
\end{equation}
for $i=1,2,3$ has a unique solution $(u^1,u^2,u^3)\in{\cal X}_{\delta_0}$. 
Moreover, there exists $C>0$, which is independent of $\delta_0$, such that
$$
 \sum_{i=1}^3\|u^i\|_{C^{2+\alpha,1+\frac{\alpha}2}(Q^i_{\delta_0})}
 \le C\sum_{i=1}^3\bigl\{\|f^i\|_{C^{\alpha,\frac{\alpha}2}(Q^i_{\delta_0})}
 +\|g^i\|_{C^{1+\alpha,\frac{1+\alpha}2}(\Sigma_\ast\times[0,\delta_0])}
 +\|\rho^i_0\|_{C^{2+\alpha}(Q^i_{\delta_0})} \bigr\}.
$$
For the proof consider the difference $v^i := u^i - \rho^i_0$ and apply 
Theorem \ref{thm:L_exist} to $v^i$.
\end{remark}

\section{Local existence} \label{sec:locexist}

With the help of the previous results we are now in a position to solve 
the nonlinear nonlocal problem \eqref{L_eq} locally in time. 
We will apply a method similar to Lunardi \cite[Th. 8.5.4]{Lu} resp. 
Baconneau and Lunardi \cite{BL}. But since we do not linearize around 
the initial state and since our problem is geometrically more involved, we state some of the arguments in detail. Note that 
for $T>0$ and $0<\alpha<1$ we use the H\"older spaces
\begin{align*}
\mathcal{X}_T 
= C^{2+\alpha,1+\frac{\alpha}{2}}(Q^1_T) \times 
  C^{2+\alpha,1+\frac{\alpha}{2}}(Q^2_T) \times 
  C^{2+\alpha,1+\frac{\alpha}{2}}(Q^3_T) \,,
\end{align*}
where $Q^i_T = \G^i_\ast \times [0,T]$. Roughly we show in the following 
theorem that if the initial state satisfies the compatibility conditions 
and lies $C^{2+\alpha}$-close to the reference state,
there is a unique solution $(u^1,u^2,u^3) \in \mathcal{X}_\delta$ of 
\eqref{L_eq} where $\delta > 0$ is chosen sufficiently small.

\begin{theo} \label{bigexisttheorem}
Assume that $\rho^i_0 \in C^{2+\alpha}(\Gamma^i_\ast)$, $i=1,2,3$, fulfill 
the compatibility conditions \eqref{comp:rho_0}. 
Then there exist constants $R_0>0$ and $\varepsilon_0>0$ such that 
for each $R\ge R_0$ there is $\delta>0$ satisfying that if 
$\sum_{i=1}^3 \|\rho^i_0\|_{C^{2+\alpha}(\Gamma_\ast^i)} \leq \varepsilon_0$, 
the nonlinear nonlocal problem \eqref{L_eq} has a unique solution 
$u=(u^1,u^2,u^3)$ in the ball $B_R(\brho_0) \subset \mathcal{X}_\delta$. 
\end{theo}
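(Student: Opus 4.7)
The argument is a Banach fixed-point argument in the ball $B_R(\brho_0) \subset \mathcal{X}_\delta$, where $\brho_0 = (\rho^1_0,\rho^2_0,\rho^3_0)$ is identified with the time-independent element of $\mathcal{X}_\delta$. For $\boldsymbol{v} = (v^1,v^2,v^3) \in B_R(\brho_0)$, define $\mathcal{M}(\boldsymbol{v}) := \bmu = (u^1,u^2,u^3)$ as the unique solution in $\mathcal{X}_\delta$ of the linear nonlocal problem
\begin{equation*}
\left\{ \begin{array}{ll}
\partial_t u^i = \mathcal{A}^i u^i + \zeta^i (\mathcal{T}(\bmu \circ \pr^i))^i + \mathfrak{f}^i(v^i,\left.\boldsymbol{v}\right|_{\Sigma_\ast})
& \mbox{on } \G^i_\ast \times [0,\delta], \\[0.1cm]
\sum_{j=1}^3 \mathcal{B}^{ij} u^j = \mathfrak{b}^i(\boldsymbol{v})
& \mbox{on } \Sigma_\ast \times [0,\delta], \\[0.1cm]
\left. u^i \right|_{t=0} = \rho^i_0
& \mbox{on } \G^i_\ast,
\end{array} \right.
\end{equation*}
whose existence is guaranteed by Remark \ref{rem:initialdate}; any fixed point of $\mathcal{M}$ solves \eqref{L_eq}. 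To apply Remark \ref{rem:initialdate} one must verify the compatibility conditions at $t=0$ for the linear problem, which, since $\boldsymbol{v}|_{t=0}=\brho_0$, reduce via the definitions \eqref{equ:defoff}--\eqref{equ:defofb} to the identities $\mathfrak{f}^i|_{t=0} = \mathcal{K}^i(\rho^i_0,\left.\brho_0\right|_{\Sigma_\ast}) - \mathcal{A}^i \rho^i_0 - \zeta^i(\mathcal{T}\brho_0)^i$ and $\mathfrak{b}^i|_{t=0} = -\mathcal{G}^i(\brho_0) + \sum_j \mathcal{B}^{ij}\rho^j_0$; the hypotheses \eqref{comp:rho_0} on $\brho_0$ then give exactly the linear compatibility required.

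\textbf{Self-mapping and contraction.} For $\boldsymbol{v}_1, \boldsymbol{v}_2 \in B_R(\brho_0)$ the difference $\mathcal{M}(\boldsymbol{v}_1) - \mathcal{M}(\boldsymbol{v}_2)$ solves a linear problem with vanishing initial data, so Theorem \ref{thm:L_exist} provides
\[
\|\mathcal{M}(\boldsymbol{v}_1) - \mathcal{M}(\boldsymbol{v}_2)\|_{\mathcal{X}_\delta} \le C \sum_{i=1}^3 \Big\{ \|\mathfrak{f}^i(\boldsymbol{v}_1) - \mathfrak{f}^i(\boldsymbol{v}_2)\|_{C^{\alpha,\alpha/2}(Q^i_\delta)} + \|\mathfrak{b}^i(\boldsymbol{v}_1) - \mathfrak{b}^i(\boldsymbol{v}_2)\|_{C^{1+\alpha,(1+\alpha)/2}(\Sigma_\ast\times[0,\delta])} \Big\}.
\]
By construction $\mathfrak{f}^i$ and $\mathfrak{b}^i$ depend smoothly and pointwise on $(v^i,\nabla v^i,\nabla^2 v^i)$ and on the traces $(\boldsymbol{v},\bar\nabla\boldsymbol{v},\bar\nabla^2\boldsymbol{v})|_{\Sigma_\ast}$ composed with $\pr^i$, hence they are Lipschitz on the bounded set $B_R(\brho_0)$ with constant depending on $R$ and $\varepsilon_0$. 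Writing $\boldsymbol{v}_j = \brho_0 + \boldsymbol{w}_j$ with $\boldsymbol{w}_j|_{t=0}=0$ and exploiting that $\boldsymbol{w}_1 - \boldsymbol{w}_2$ also vanishes at $t=0$, standard interpolation of parabolic H\"older norms on a short time interval gains a positive power of $\delta$ from the lower-order pieces, yielding
\[
\|\mathcal{M}(\boldsymbol{v}_1) - \mathcal{M}(\boldsymbol{v}_2)\|_{\mathcal{X}_\delta} \le \omega(\delta,R,\varepsilon_0)\,\|\boldsymbol{v}_1-\boldsymbol{v}_2\|_{\mathcal{X}_\delta}, \quad \omega(\delta,R,\varepsilon_0) \to 0 \; \mbox{ as } \delta \to 0.
\]
For self-mapping we split $\|\mathcal{M}(\boldsymbol{v}) - \brho_0\|_{\mathcal{X}_\delta} \le \|\mathcal{M}(\boldsymbol{v}) - \mathcal{M}(\brho_0)\|_{\mathcal{X}_\delta} + \|\mathcal{M}(\brho_0) - \brho_0\|_{\mathcal{X}_\delta}$: the first summand is absorbed by the contraction bound above, and the second is controlled by the Schauder estimate applied to the linear problem solved by $\mathcal{M}(\brho_0) - \brho_0$, whose source and boundary data involve only the (time-independent) evaluations of $\mathfrak{f}^i, \mathfrak{b}^i$ at $\brho_0$ and whose norm can be made smaller than $R/2$ by choosing $\varepsilon_0$ and $\delta$ small enough (again using that $\mathcal{M}(\brho_0)-\brho_0$ vanishes at $t=0$). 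Fixing $R \ge R_0$ large enough to absorb the absolute term and tuning $\varepsilon_0, \delta$ so that $\omega < 1/2$, we obtain a strict contraction $\mathcal{M} : B_R(\brho_0) \to B_R(\brho_0)$, and Banach's fixed point theorem produces the unique solution.

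\textbf{Main obstacle.} The principal technical difficulty is the combined nonlocal/quasilinear structure: $\mathfrak{f}^i$ contains the operator $(\mathrm{Id} - \mathcal{D}_\dagger(\boldsymbol{v},\left.\boldsymbol{v}\right|_{\Sigma_\ast})\mathcal{T})^{-1}$ from \eqref{mathcal_P}, whose invertibility on $B_R(\brho_0)$ has to be established first (a Neumann series works because $\mathfrak{a}^i_\dagger(0)=0$ by $\langle\tau^i_\ast,N^i_\ast\rangle=0$), and one must then track smoothness of this inverse as a nonlinear map between H\"older spaces. Second, the nonlocality via $\pr^i$ lets traces of $\boldsymbol{v}$ and its derivatives on $\Sigma_\ast$ enter the source term on $\G^i_\ast$, so the balance of boundary $C^{1+\alpha,(1+\alpha)/2}$-regularity against the required interior $C^{\alpha,\alpha/2}$-norm must be handled carefully; this is precisely where the strategy of Baconneau--Lunardi \cite{BL} enters. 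As pointed out at the end of Remark \ref{rem:initialdate}, the situation here is in fact simpler than in \cite{BL} because the linear nonlocal term $\zeta^i(\mathcal{T}(\bmu \circ \pr^i))^i$ contains no derivatives of $\bmu$, so no loss of regularity arises from the nonlocal coupling itself.
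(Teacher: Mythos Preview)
Your overall architecture matches the paper's: define the solution map via the linear problem of Remark~\ref{rem:initialdate}, verify compatibility at $t=0$ from \eqref{comp:rho_0}, split self-mapping as you do, and close by a contraction. However, your contraction estimate contains a real gap. You claim $\omega(\delta,R,\varepsilon_0)\to 0$ as $\delta\to 0$, justified by saying interpolation ``gains a positive power of $\delta$ from the lower-order pieces''. This fails at top order. The linear operator subtracted in $\mathfrak{f}^i$ is $\partial\mathcal{K}^i(0)$, the linearization at the \emph{reference} surface $\brho=0$, not at $\brho_0$. Hence in $\mathfrak{f}^i(\boldsymbol{v}_1)-\mathfrak{f}^i(\boldsymbol{v}_2)$ the coefficient multiplying $\nabla^2(v^i_1-v^i_2)$ is, after writing the usual integral remainder, of the form $\int_0^1\partial_{\nabla^2}\mathcal{K}^i(\xi_s)\,ds-\partial_{\nabla^2}\mathcal{K}^i(0)$; at $t=0$ this equals $\partial_{\nabla^2}\mathcal{K}^i(\brho_0)-\partial_{\nabla^2}\mathcal{K}^i(0)\neq 0$. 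When you bound the $C^{\alpha,\alpha/2}$-seminorm of this product, the term $\|\mbox{coefficient}\|_\infty\cdot[\nabla^2(v^i_1-v^i_2)]_{C^{\alpha,\alpha/2}}$ contributes $L\varepsilon_0\,\|\boldsymbol{v}_1-\boldsymbol{v}_2\|_{\mathcal{X}_\delta}$, which does \emph{not} shrink with $\delta$ (there is no interpolation gain at the top of the norm). The correct contraction factor is $C(R)\delta^{\alpha/2}+L\varepsilon_0$; one needs \emph{both} $\delta$ and $\varepsilon_0$ small, which is exactly why the hypothesis $\sum_i\|\rho^i_0\|_{C^{2+\alpha}}\le\varepsilon_0$ with $\varepsilon_0$ small is essential and not just a boundedness assumption. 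The paper makes this precise by splitting off the time-independent piece $(\partial\mathcal{K}^i(\brho_0)-\partial\mathcal{K}^i(0))(\boldsymbol{v}_1-\boldsymbol{v}_2)$ and estimating it separately via a Lipschitz bound \eqref{eq:secondlipforK}.

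There is also a smaller inconsistency in your self-mapping step: you say $\|\mathcal{M}(\brho_0)-\brho_0\|_{\mathcal{X}_\delta}$ can be made $<R/2$ by taking $\varepsilon_0,\delta$ small. But the source in the linear problem solved by $\mathcal{M}(\brho_0)-\brho_0$ is $\mathcal{K}^i(\brho_0)$, which tends to $\beta^i H^i_\ast$ (generically nonzero, since $\Gamma^i_\ast$ is not assumed stationary) as $\varepsilon_0\to 0$, and the Schauder constant in Theorem~\ref{thm:L_exist} is independent of $\delta$. Thus this term has a positive lower bound $C'=C'(\varepsilon_0)$; the remedy, which you do hint at in your last sentence, is to choose $R_0\ge 2C'$ rather than to shrink $\varepsilon_0$ or $\delta$.
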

 
\begin{proof}
Let $r>0$ be a constant such that for $v^i\in C^2(\Gamma_\ast^i)$ with 
$\sum_{i=1}^3\|v^i\|_{C^2(\Gamma_\ast^i)}\leq r$ the following assumptions hold:
\begin{enumerate}
\item[(A1)] 
${\cal F}^i(v^i,\left. \bmv \right|_{\Sigma_\ast})$ and 
$\mathfrak{a}^i_{\dag}(v^i,\left. \bmv \right|_{\Sigma_\ast})$ 
(see (\ref{rho_equ_rewritten})) are well-defined as well as 
${\cal P}(\bmv,\left. \bmv \right|_{\Sigma_\ast})$ (see (\ref{mathcal_P})).
\item[(A2)] 
Any first order derivatives of ${\cal F}^i$ with respect to 
$v^i$, $\left. \bmv \right|_{\Sigma_\ast}$, $\nabla_j v^i$, 
$\bar{\nabla}_j \left. \bmv \right|_{\Sigma_\ast}$, 
$\nabla_j \nabla_k v^i$ and $\bar{\nabla}_j \bar{\nabla}_k 
\left. \bmv \right|_{\Sigma_\ast}$ 
are locally Lipschitz continuous with respect to those. 
Also, any first order derivatives of $\mathfrak{a}^i_{\dag}$ with respect to 
$v^i$, $\left. \bmv \right|_{\Sigma_\ast}$, $\nabla_j v^i$ and 
$\bar{\nabla}_j \left. \bmv \right|_{\Sigma_\ast}$ are locally Lipschitz 
continuous with respect to those.
\item[(A3)] 
Any second order derivatives of $\mathfrak{b}^i$ with respect to 
$\left. \bmv \right|_{\Sigma_\ast}$ and 
$\bar{\nabla}_j \left. \bmv \right|_{\Sigma_\ast}$ are 
locally Lipschitz continuous with respect to those.
\end{enumerate}

We remark that these properties are realized for sufficiently small $r$
since with the notations 
$z^i_1 =(v^i,\nabla v^i,\left. \bmv \right|_{\Sigma_\ast},
\bar{\nabla} \left. \bmv \right|_{\Sigma_\ast} )$ 
and $z^i_2 =(v^i, \nabla v^i, \nabla^2 v^i, \left. \bmv \right|_{\Sigma_\ast},
\bar{\nabla} \left. \bmv \right|_{\Sigma_\ast}, 
\bar{\nabla}^2 \left. \bmv \right|_{\Sigma_\ast})$
the quantities $(g^i)_{jk}$, $\det \left((g^i)_{jk} \right)$, $N^i$, and 
$(h^i)_{jk}$ are represented as
\begin{equation}
\left\{\begin{array}{ll}
(g^i)_{jk} = (g^i_\ast)_{jk} + P^i_{jk}(z^i_1) \,, 
&g^i = \det \left( (g^i)_{jk} \right) 
= \det \left( (g^i_\ast)_{jk} \right) + P^i(z^i_1)  \,, \\[0.05cm]%
N^i = N^i_\ast \, R^i(z^i_1) + Q^i(z^i_1) \,, 
&h^i_{jk} = (h^i_\ast)_{jk} \, R^i(z^i_1) + S^i_{jk}(z^i_2) \,, 
\end{array}\right.
\label{precise_form}
\end{equation}
where $P^i_{jk}$ and $P^i$ are polynomial functions with $P^i_{jk}(0)=0$ 
and $P^i(0)=0$, and $R^i$, $Q^i$ and $S^i_{jk}$ are rational functions with 
$R^i(0)=1$, $Q^i(0)=0$ and $S^i_{jk}(0)=0$. From Remark~\ref{rem:localdiff} 
we know that $g^i \neq 0$ for $v^i$ small enough in the $C^1$-norm and that 
$\partial_1 \Phi^i,\ldots,\partial_n \Phi^i$ are linearly independent, 
in particular $|\partial_1 \Phi^i \times \ldots \times \partial_n \Phi^i| 
\neq 0$, and therefore also $N^i$ is well-defined. 

Now fix $R > 0$ and define the set 
\begin{align} \label{def:D_R}
{\cal D}_R &= \bigl\{ (v^1,v^2,v^3) \in {\cal X}_\delta \,\big|\, 
v^i(\sigma,0)=\rho^i_0 ,\, 
\sum\nolimits_{i=1}^3 
\|v^i - \rho^i_0\|_{C^{2+\alpha,1+\frac{\alpha}{2}}(Q^i_{\delta})} 
\le R \bigr\} .
\end{align}
For $\bmv \in \mathcal{D}_R$ we deduce from a standard estimate 
for parabolic H\"older spaces, see e.g.
Lunardi \cite[Lem. 5.1.1]{Lu}, that for all $t \in [0,T]$ we have 
\begin{align} 
\begin{split}
\sum_{i=1}^3 \|v^i(\cdot\,,t)\|_{C^2(\G^i_\ast)} 
& \leq \sum_{i=1}^3 \|v^i(\cdot\,,t) - \rho^i_0\|_{C^2(\G^i_\ast)} 
       + \sum_{i=1}^3 \|\rho_0^i\|_{C^2(\G^i_\ast)} \\
& \leq \left( \delta^{\frac{\alpha}{2}} + C \delta^{\frac{1+\alpha}{2}} + \delta \right) 
        \|v^i - \rho^i_0\|_{C^{2+\alpha,1+\frac{\alpha}{2}}(Q^i_{\delta})} + \varepsilon_0 \\
& \leq \tilde{C}\delta^{\frac{\alpha}2}\sum_{i=1}^3 
       \|v^i - \rho^i_0\|_{C^{2+\alpha,1+\frac{\alpha}{2}}(Q^i_{\delta})} 
       + \varepsilon_0 \\
& \leq \tilde{C}\delta^{\frac{\alpha}2} R + \varepsilon_0 \,, \label{inequ:C1}
\end{split}
\end{align}
where the positive constant $\tilde{C}$ depends only on $\alpha$
and $\max\{1,\delta^{1-\frac{\alpha}2}\}$. 
This shows that for sufficiently small $\delta$ and $\varepsilon_0$ 
the operators $\mathcal{F}^i$, $\mathfrak{a}^i_\dagger$ and $\mathfrak{b}^i$, 
evaluated at functions of the form $v^i(\cdot,t)$, satisfy (A1)-(A3) for 
all $t \in [0,\delta]$. In particular we remark for later use that for 
the right hand side $\mathcal{K}^i$ of the first line in 
\eqref{prob:nonlinearnonlocal}, which is a combination of terms of 
the form $\mathcal{F}^i$ and $\mathfrak{a}^i_\dagger$, we can conclude 
an analogue statement as in (A1)-(A2). This means that for 
$\bmv$, $\bmw \in \mathcal{D}_R$ the operator $\mathcal{K}^i$ is well-defined 
and it holds 
\begin{align} \label{eq:firstlipforK}
\| D_{\sbmv} \mathcal{K}^i(v^i,\left. \bmv \right|_{\Sigma_\ast}) 
-  D_{\sbmv} \mathcal{K}^i(w^i,\left. \bmw \right|_{\Sigma_\ast}) \|_\infty
& \leq L \sum_{i=1}^3 \| v^i - w^i \|_{C^{2+\alpha,1+\frac{\alpha}{2}}
(Q^i_{\delta})} \,,
\end{align}
where $D_{\sbmv}$ 
is any first order derivative in 
$\{\partial_{v^i}, \partial_{\nabla_k v^i}, 
\partial_{\nabla_{kj}^2 v^i}, \partial_{\left. \sbmv \right|_{\Sigma_\ast}},
\partial_{\overline{\nabla}_k \left. \sbmv \right|_{\Sigma_\ast}}, 
\partial_{\overline{\nabla}_{kj}^2 \left. \sbmv \right|_{\Sigma_\ast}} \}$. 
Note that $L$ depends only on the chosen $r>0$ from the beginning 
of the proof. In particular the same estimate holds true for 
$\bmv = \brho_0$ and $\bmw = \boldsymbol{0}$, i.e. 
\begin{align} \label{eq:secondlipforK}
\| D_{\sbmv} \mathcal{K}^i(\rho^i_0,\left. \brho_0 \right|_{\Sigma_\ast}) 
-  D_{\sbmv} \mathcal{K}^i(0) \|_\infty 
& \leq L \sum_{i=1}^3 \| \rho^i_0 \|_{C^{2+\alpha}(\G^i_\ast)} \,.
\end{align}
Due to the Lipschitz-continuity we also have that $D_{\sbmv} \mathcal{K}^i$ 
is bounded as a mapping from 
$\mathcal{D}_R \subset C^{2+\alpha, 1+\frac{\alpha}{2}}(Q^i_\delta)$ into 
$C^{\alpha,\frac{\alpha}{2}}(Q^i_\delta)$, which will be used later to 
estimate 
\begin{align} \label{eq:thirdlipforK}
\left[ D_{\sbmv} \mathcal{K}^i(v^i,\left. \bmv \right|_{\Sigma_\ast}) \right]
_{C^{\alpha,\frac{\alpha}{2}}} &\leq C(R) \,.
\end{align}

Fix $\bmv = (v^1,v^2,v^3) \in \mathcal{D}_R$ and let 
$\bmu = (u^1,u^2,u^3) = \Lambda(\bmv)$ be the solution of the linear,
nonhomogeneous problem for $i=1,2,3$:
\begin{equation} \label{Lambda_problem}
  \left\{\begin{array}{ll}
           \partial_t u^i={\cal A}^iu^i+\zeta^i({\cal T}(\bmu\circ\pr^i))^i
              +\mathfrak{f}^i(v^i,\left.\bmv\right|_{\Sigma_\ast})
              & \mbox{on } \; \G^i_\ast \times[0,\delta], \\[0.1cm]
           \dis\sum_{j=1}^3{\cal B}^{ij}u^j=\mathfrak{b}^i(\bmv) 
              & \mbox{on } \; \Sigma_\ast \times [0,\delta], \\[0.6cm]
           u^i\big|_{t=0}=\rho^i_0 & \mbox{on } \; \G^i_\ast \,.
\end{array}\right.
\end{equation}
Due to the compatibility condition \eqref{comp:rho_0} for $\brho_0$, 
we see that $\mathfrak{f}^i$ and $\mathfrak{b}^i$ satisfy the necessary 
compatibility conditions to apply Remark \ref{rem:initialdate}, 
that is
\begin{align*}
\sum_{i=1}^3 \gamma^i \left. \mathfrak{f}^i(v^i,\left.\bmv\right|_{\Sigma_\ast})\right|_{t=0} 
= -\sum_{i=1}^3 \gamma^i \left( \mathcal{A}^i \rho^i_0 
  + \zeta \left( \mathcal{T} \brho_0 \right)^i \right)
  \, \mbox{ and } \,
\left. \mathfrak{b}^i(\bmv) \right|_{t=0} 
= \sum_{j=1}^3 \mathcal{B}^{ij} \rho^j_0 \quad \mbox{ on } \; \Sigma_\ast \,.
\end{align*}
Therefore we get a unique solution $\bmu \in \mathcal{X}_\delta$ of 
\eqref{Lambda_problem} for given $\bmv \in \mathcal{D}_R$ for 
a possibly smaller $\delta > 0$, but not depending on the choice of 
$\bmv \in \mathcal{D}_R$. 

If we are now able to find a fixed point of $\Lambda$, then this is a local 
solution to the nonlinear problem \eqref{L_eq}. Thus we will prove that 
$\Lambda$ maps ${\cal D}_R$ into itself and is a contraction for suitable 
$\delta$, $\varepsilon_0$ and $R$. 

For $\bmv, \bmw \in {\cal D}_R$ we see that 
$\bmu = \Lambda(\bmv) - \Lambda(\bmw)$ is the solution of 
\begin{equation} \label{L_eq_diff}
\left\{ \begin{array}{ll} 
\partial_t u^i 
= {\cal A}^i u^i +\zeta^i({\cal T}(\bmu\circ\pr^i))^i 
  + \mathfrak{f}^i(v^i,\left.\bmv\right|_{\Sigma_\ast})
  -\mathfrak{f}^i(w^i,\left.\bmw\right|_{\Sigma_\ast})
    & \mbox{on } \; \G^i_\ast \times [0,\delta], \\[0.1cm]
\dis\sum_{j=1}^3{\cal B}^{ij}u^j 
= \mathfrak{b}^i(\bmv)-\mathfrak{b}^i(\bmw)
    & \mbox{on } \; \Sigma_\ast \times [0,\delta], \\[0.6cm]
u^i(.\,,0)=0 & \mbox{on } \; \G^i_\ast
\end{array} \right.
\end{equation}
for $i=1,2,3$. Then, by means of Theorem~\ref{thm:L_exist}, we have 
the estimate 
\begin{align*}
&\sum_{i=1}^3\|u^i\|_{C^{2+\alpha,1+\frac{\alpha}2}(Q^i_{\delta})} \\
&\le C\sum_{i=1}^3\Bigl\{ 
\|\mathfrak{f}^i(v^i,\left.\bmv\right|_{\Sigma_\ast})
  -\mathfrak{f}^i(w^i,\left.\bmw\right|_{\Sigma_\ast})\|
_{C^{\alpha,\frac{\alpha}2}(Q^i_{\delta})} 
+\|\mathfrak{b}^i(\bmv)-\mathfrak{b}^i(\bmw)\|
_{C^{1+\alpha,\frac{1+\alpha}2}(\Sigma_\ast \times [0,\delta])}\Bigr\}.
\end{align*}
Now we claim that there are constants $C(R)$ and $L$ such that 
\begin{align} 
\begin{split}
&\sum_{i=1}^3\Bigl\{ 
\|\mathfrak{f}^i(v^i,\left.\bmv\right|_{\Sigma_\ast}) 
  - \mathfrak{f}^i(w^i,\left.\bmw\right|_{\Sigma_\ast})\|
_{C^{\alpha,\frac{\alpha}2}(Q^i_{\delta})} 
+ \|\mathfrak{b}^i(\bmv)-\mathfrak{b}^i(\bmw)\|
_{C^{1+\alpha,\frac{1+\alpha}2}(\Sigma_\ast \times [0,\delta])}\Bigr\}  \\
& \le \left( C(R)\delta^{\frac{\alpha}2} 
+ L\varepsilon_0 \right) 
  \sum_{i=1}^3\|v^i-w^i\|_{C^{2+\alpha,1+\frac{\alpha}2}(Q^i_{\delta})}, \label{key_estimate}
\end{split}
\end{align}
where $C(R)$ is independent of $\delta$ and $L$ is as in 
(\ref{eq:firstlipforK}). 
To show the estimate for $\mathfrak{f}^i$, we use the notation 
$\mathcal{A}^i_{\mbox{\footnotesize all}} \bmv 
= \mathcal{A}^i \bmv + \zeta^i (\mathcal{T} (\bmv \circ \pr^i))^i$ 
for the linearization including the nonlocal terms to get, 
compare \eqref{equ:defoff},
\begin{align*}
\mathfrak{f}^i(v^i,\left.\bmv\right|_{\Sigma_\ast}) 
= \mathcal{K}^i(v^i,\left.\bmv\right|_{\Sigma_\ast})
  - \mathcal{A}^i_{\mbox{\footnotesize all}} \bmv \,.
\end{align*}
Note that herein $\mathcal{A}^i_{\mbox{\footnotesize all}} \bmv 
= \partial \mathcal{K}^i(0) \bmv$ is the linearization around 
the reference hypersurfaces represented through $\brho = 0$ and 
that $\mathcal{K}^i$ is a nonlinear nonlocal operator depending on 
$v^i$, $\nabla v^i$, $\nabla^2 v^i$, $\left.\bmv\right|_{\Sigma_\ast}$, 
$\bar{\nabla} \left.\bmv\right|_{\Sigma_\ast}$ and
$\bar{\nabla}^2 \left.\bmv\right|_{\Sigma_\ast}$, compare \eqref{equ:tildeK}.

The difference in $\mathfrak{f}^i$ can be written locally with the help of 
a suitable parametrization as follows
\begin{align*}
&\hspace*{-10pt}
\mathfrak{f}^i(v^i,\left.\bmv\right|_{\Sigma_\ast}) 
- \mathfrak{f}^i(w^i,\left.\bmw\right|_{\Sigma_\ast}) \\
=& \int_0^1 \frac{d}{d s} 
   \mathcal{K}^i(\xi_s(v^i,w^i,\left.\bmv\right|_{\Sigma_\ast},
   \left.\bmw\right|_{\Sigma_\ast}))\,ds 
   - \mathcal{A}^i_{\mbox{\footnotesize all}}(\bmv - \bmw) \\
=& \,\Theta^i(v^i,w^i,\left.\bmv\right|_{\Sigma_\ast},
     \left.\bmw\right|_{\Sigma_\ast}) (v^i-w^j)
   + \sum_{j=1}^3 \bar{\Theta}^{i,j}(v^i,w^i,
     \left.\bmv\right|_{\Sigma_\ast},\left.\bmw\right|_{\Sigma_\ast}) 
     (\left. v^j \right|_{\Sigma_\ast} - \left. w^j \right|_{\Sigma_\ast}) \\
 & + \sum_{k=1}^n \Theta^i_k(v^i,w^i,\left.\bmv\right|_{\Sigma_\ast},
     \left.\bmw\right|_{\Sigma_\ast}) \nabla_k(v^i-w^j) \\
 & + \sum_{j=1}^3 \sum_{k=1}^{n-1} \bar{\Theta}^{i,j}_k(v^i,w^i,
     \left.\bmv\right|_{\Sigma_\ast},\left.\bmw\right|_{\Sigma_\ast}) 
     \bar{\nabla}_k (\left. v^j \right|_{\Sigma_\ast} 
     - \left. w^j \right|_{\Sigma_\ast}) \\
 & + \sum_{k,l=1}^n \Theta^i_{k,l}(v^i,w^i,
     \left.\bmv\right|_{\Sigma_\ast},\left.\bmw\right|_{\Sigma_\ast}) 
     \nabla_{kl}^2(v^i-w^j) \\
 & + \sum_{j=1}^3 \sum_{k,l=1}^{n-1} \bar{\Theta}^{i,j}_{k,l}(v^i,w^i,
     \left.\bmv\right|_{\Sigma_\ast},\left.\bmw\right|_{\Sigma_\ast}) 
     \bar{\nabla}^2_{kl} (\left. v^j \right|_{\Sigma_\ast} 
     - \left. w^j \right|_{\Sigma_\ast}) \\
 & + \left( \partial \mathcal{K}^i(\rho^i_0, 
     \left. \brho_0 \right|_{\Sigma_\ast}) - \partial \mathcal{K}^i(0) \right) 
     (\bmv - \bmw) \,,
\end{align*}
where with $\xi_0 = (\rho^i_0, \left. \brho_0 \right|_{\Sigma_\ast})$ we use 
the following notation
\begin{align*}
\xi_s(v^i,w^i,\left.\bmv\right|_{\Sigma_\ast},\left.\bmw\right|_{\Sigma_\ast}) 
&= \left( s v^i + (1-s) w^i, s \left.\bmv\right|_{\Sigma_\ast} 
          + (1-s) \left.\bmw\right|_{\Sigma_\ast} \right), \\
\Theta^i(v^i,w^i,\left.\bmv\right|_{\Sigma_\ast},
         \left.\bmw\right|_{\Sigma_\ast}) 
&= \int_0^1 \left( \partial_{v^i} 
   \mathcal{K}^i(\xi_s(v^i,w^i,\left.\bmv\right|_{\Sigma_\ast},
                 \left.\bmw\right|_{\Sigma_\ast}))
   - \partial_{v^i} \mathcal{K}^i(\xi_0) \right)\,ds, \\
\bar{\Theta}^{i,j}(v^i,w^i,\left.\bmv\right|_{\Sigma_\ast},
                   \left.\bmw\right|_{\Sigma_\ast}) 
&= \int_0^1 \left( \partial_{\left.v^j\right|_{\Sigma_\ast}} 
   \mathcal{K}^i(\xi_s(v^i,w^i,\left.\bmv\right|_{\Sigma_\ast},
                 \left.\bmw\right|_{\Sigma_\ast}))
   - \partial_{\left.v^j\right|_{\Sigma_\ast}} \mathcal{K}^i(\xi_0) \right)
   \,ds,  \\
\Theta^i_k(v^i,w^i,\left.\bmv\right|_{\Sigma_\ast},
           \left.\bmw\right|_{\Sigma_\ast}) 
&= \int_0^1 \left( \partial_{\nabla_k v^i} 
   \mathcal{K}^i(\xi_s(v^i,w^i,\left.\bmv\right|_{\Sigma_\ast},
                 \left.\bmw\right|_{\Sigma_\ast}))
   - \partial_{\nabla_k v^i} \mathcal{K}^i(\xi_0) \right)\,ds, \\
\bar{\Theta}^{i,j}_k(v^i,w^i,\left.\bmv\right|_{\Sigma_\ast},
                     \left.\bmw\right|_{\Sigma_\ast}) 
&= \int_0^1 \left( \partial_{\bar{\nabla}_k \left.v^j\right|_{\Sigma_\ast}} 
   \mathcal{K}^i(\xi_s(v^i,w^i,\left.\bmv\right|_{\Sigma_\ast},
                 \left.\bmw\right|_{\Sigma_\ast}))
   - \partial_{\bar{\nabla}_k \left.v^j\right|_{\Sigma_\ast}} 
     \mathcal{K}^i(\xi_0) \right)\,ds, \\
\Theta^i_{k,l}(v^i,w^i,\left.\bmv\right|_{\Sigma_\ast},
               \left.\bmw\right|_{\Sigma_\ast}) 
&= \int_0^1 \left( \partial_{\nabla_{kl}^2 v^i} 
   \mathcal{K}^i(\xi_s(v^i,w^i,\left.\bmv\right|_{\Sigma_\ast},
                 \left.\bmw\right|_{\Sigma_\ast}))
   - \partial_{\nabla_{kl}^2 v^i} \mathcal{K}^i(\xi_0) \right)\,ds, \\
\bar{\Theta}^{i,j}_{k,l}(v^i,w^i,\left.\bmv\right|_{\Sigma_\ast},
                         \left.\bmw\right|_{\Sigma_\ast}) 
&= \int_0^1 \left(\partial_{\bar{\nabla}_{kl}^2 \left.v^j\right|_{\Sigma_\ast}}
   \mathcal{K}^i(\xi_s(v^i,w^i,\left.\bmv\right|_{\Sigma_\ast},
                 \left.\bmw\right|_{\Sigma_\ast}))
   - \partial_{\bar{\nabla}_{kl}^2 \left.v^j\right|_{\Sigma_\ast}} 
     \mathcal{K}^i(\xi_0) \right)\,ds.
\end{align*}
Herein, by a slight abuse of notation, we identify the $\mathcal{K}^i$-terms 
with its localized versions.

Now we observe for $\Theta \in \{ \Theta^i, \bar{\Theta}^i, \Theta^i_k, 
\bar{\Theta}^{i,j}_k, \Theta^i_{k,l}, \bar{\Theta}^{i,j}_{k,l} \}$ 
that $\left. \Theta \right|_{t=0} = 0$, and therefore we derive
\begin{align*}
\| \Theta \|_\infty 
\leq \delta^{\frac{\alpha}2} \la \Theta \ra^{\frac{\alpha}2}_t 
\leq C(R) \, \delta^{\frac{\alpha}2} \,.
\end{align*}
Additionally (\ref{eq:secondlipforK}) gives
$$
\| (\partial  \mathcal{K}^i(\rho^i_0, \left. \brho_0 \right|_{\Sigma_\ast}) 
- \partial \mathcal{K}^i(0))(\bmv-\bmw) \|_\infty 
\leq L\sum_{i=1}^3 \|\rho_0^i \|_{C^2} 
\sum_{i=1}^3 \|v^i - w^i\|_{C^{2+\alpha,1+\frac{\alpha}{2}}(Q^i_\delta)}\,,
$$
so that we arrive at
\begin{align*}
\| \mathfrak{f}^i(v^i,\left.\bmv\right|_{\Sigma_\ast}) 
- \mathfrak{f}^i(w^i,\left.\bmw\right|_{\Sigma_\ast}) \|_\infty
\leq \bigl( C(R) \delta^{\frac{\alpha}2} + L\varepsilon_0 \bigr)
  \sum_{i=1}^3 \|v^i - w^i\|_{C^{2+\alpha,1+\frac{\alpha}{2}}(Q^i_\delta)} \,.
\end{align*}
Moreover it follows from $\left. D(v^i-w^i) \right|_{t=0} = 0$, where 
$D \in \{\nabla^0, \nabla_k, \nabla^2_{k,l},\bar{\nabla}^0, \bar{\nabla}_{k}, 
\bar{\nabla}^2_{k,l} \}$ (of course for surface gradients $\bar{\nabla}$ 
we restrict the function $v^i-w^i$ to the triple junction $\Sigma_\ast$), that
\begin{align*}
\| D(v^i-w^i) \|_\infty 
\leq \delta^{\frac{\alpha}2} \la D(v^i-w^i) \ra_t^{\frac{\alpha}2}
\leq \delta^{\frac{\alpha}2} 
     \sum_{i=1}^3\|v^i - w^i\|_{C^{2+\alpha,1+\frac{\alpha}{2}}(Q^i_\delta)}\,.
\end{align*}
Set $[\,\cdot\,]_{C^{\alpha,\frac{\alpha}2}} = \la\,\cdot\,\ra_x^{\alpha} 
+ \la\,\cdot\,\ra_t^{\frac{\alpha}2}$ and let $\Theta D(v^i-w^i)$ be 
corresponding to each other as in the formula for the difference in 
$\mathfrak{f}^i$. Then we obtain
\begin{align*}
\left[ \Theta D(v^i-w^i) \right]_{C^{\alpha,\frac{\alpha}2}} 
& \leq \|\Theta\|_\infty \left[D(v^i-w^i)\right]_{C^{\alpha,\frac{\alpha}2}} 
  + \left[\Theta\right]_{C^{\alpha,\frac{\alpha}2}} \|D(v^i-w^i)\|_\infty \\
& \leq C(R) \delta^{\frac{\alpha}2} 
  \sum_{i=1}^3 \|v^i - w^i\|_{C^{2+\alpha,1+\frac{\alpha}{2}}(Q^i_\delta)} \,.
\end{align*}
Additionally it follows from (\ref{eq:secondlipforK}) and 
(\ref{eq:thirdlipforK}) that
$$
\left[ \left( 
\partial \mathcal{K}^i(\rho^i_0, \left. \brho_0 \right|_{\Sigma_\ast}) 
- \partial \mathcal{K}^i(0) \right) (\bmv - \bmw) \right]
_{C^{\alpha,\frac{\alpha}2}}
\leq (C(R)\delta^{\frac{\alpha}2}+L\varepsilon_0)
  \sum_{i=1}^3 \|v^i - w^i\|_{C^{2+\alpha,1+\frac{\alpha}{2}}(Q^i_\delta)}\,.
$$
Thus we are led to
\begin{align*}
\left[ \mathfrak{f}^i(v^i,\left.\bmv\right|_{\Sigma_\ast}) 
- \mathfrak{f}^i(w^i,\left.\bmw\right|_{\Sigma_\ast}) \right]
_{C^{\alpha,\frac{\alpha}2}}
\le \left( C(R)\delta^{\frac{\alpha}2} + L \varepsilon_0 \right)
\sum_{i=1}^3\|v^i-w^i\|_{C^{2+\alpha,1+\frac{\alpha}{2}}(Q^i_{\delta})} \, .
\end{align*}
By using (A3) we can give analogously an estimate for the differences 
in $\mathfrak{b}^i$ and therefore we arrive at the inequality 
(\ref{key_estimate}). 
Consequently, we obtain that $\Lambda$ is a $1/2$-contraction provided 
$\delta$ and $\varepsilon_0$ are small enough.

To see that $\Lambda$ maps $\mathcal{D}_R$ into itself, we have for 
$\bmv \in \mathcal{D}_R$ and $\bmu = \Lambda(\bmv)$
\begin{align*}
\sum_{i=1}^3 
\| u^i - \rho_0^i \|_{C^{2+\alpha,1+\frac{\alpha}{2}}(Q^i_{\delta})} 
& \leq \sum_{i=1}^3 \left(
\|\Lambda(\bmv)^i-\Lambda(\brho_0)^i\|
_{C^{2+\alpha,1+\frac{\alpha}{2}}(Q^i_{\delta})}
+\|\Lambda(\brho_0)^i-\rho_0^i\|
_{C^{2+\alpha,1+\frac{\alpha}{2}}(Q^i_{\delta})}\right) \\
& \leq \frac{R}{2} 
  + \sum_{i=1}^3 \| \Lambda(\brho_0)^i - \rho_0^i \|
                 _{C^{2+\alpha,1+\frac{\alpha}{2}}(Q^i_{\delta})} .
\end{align*}
For the second inequality, we used the fact that $\Lambda$ is 
a $1/2$-contraction provided $\delta$ and $\varepsilon_0$ are small enough.
The function $\bmw = \Lambda(\brho_0) - \brho_0$ is the solution of
\begin{equation} \label{L_eq_diffrho_0}
\left\{ \begin{array}{ll} 
  \partial_t w^i 
  = {\cal A}_{\mbox{\footnotesize all}}^i w^i 
    + \mathcal{K}^i(\rho_0^i,\left.\brho_0\right|_{\Sigma_\ast})
       & \mbox{on } \; \G^i_\ast \times [0,\delta], \\[0.1cm]
  \dis\sum_{j=1}^3{\cal B}^{ij}w^j = 0
       & \mbox{on } \; \Sigma_\ast \times [0,\delta], \\[0.6cm]
  w^i(\cdot\,,0)=0 & \mbox{on } \; \G^i_\ast \,.
\end{array}\right.
\end{equation}
Due to the assumptions \eqref{comp:rho_0} on $\brho_0$ the compatibility 
conditions from Theorem \ref{thm:L_exist} are fulfilled and we can apply it 
to get the existence of a $C>0$ independent of $\delta > 0$, such that 
the solution $\bmw$ of \eqref{L_eq_diffrho_0} satisfies
\begin{align*}
\sum_{i=1}^3 \| w^i \|_{C^{2+\alpha,1+\frac{\alpha}{2}}(Q^i_{\delta})} 
& \leq C \sum_{i=1}^3  
  \| \mathcal{K}^i(\rho_0^i,\left.\brho_0\right|_{\Sigma_\ast}) \|
  _{C^{\alpha,\frac{\alpha}2}} .
\end{align*}
We estimate the right side of the above inequality by $C'=C'(\varepsilon_0)$ 
and we arrive at
\begin{align*}
\sum_{i=1}^3 
\| u^i - \rho_0^i \|_{C^{2+\alpha,1+\frac{\alpha}{2}}(Q^i_{\delta})}
& \leq \frac{R}{2} + C' \,.
\end{align*}
Therefore for $R$ suitably large enough $\Lambda$ maps $\mathcal{D}_R$ 
into itself.
In the following we illustrate the choice of the constants 
in detail. 
First we choose $\varepsilon_0 > 0$ such that 
$L \varepsilon_0 < 1/4$ and $\varepsilon_0 < r/2$. 
Then we choose $R_0>0$ such that $C'(\varepsilon_0) < R_0/2$,
which means that $R_0/2 + C'(\varepsilon_0) < R_0$. Now for 
a given arbitrary but fixed $R \geq R_0$ we choose $\delta > 0$ 
such that
\begin{align*}
\tilde{C}\delta^{\frac{\alpha}2}R < \frac{r}{2} \quad \mbox{ and } \quad 
C(R) \delta^{\frac{\alpha}{2}} < \frac{1}{4} \,,
\end{align*}
where the constants  $\tilde{C}$, $C(R)$ are from inequalities 
\eqref{inequ:C1} and \eqref{key_estimate}.
With this choice of $\varepsilon_0$, $R$ and $\delta$ we observe
$$
\begin{array}{ll}
\tilde{C}\delta^{\frac{\alpha}2}R + \varepsilon_0 < r 
&\quad \mbox{(such that the properties (A1)-(A3) are fulfilled)}, \\[0.05cm]
\dfrac{R}{2} + C'(\varepsilon_0) < R 
&\quad \mbox{(such that $\Lambda$ is a self mapping)}, \\[0.05cm]
C(R) \delta^{\frac{\alpha}{2}} + L \varepsilon_0 < \dfrac{1}{2} 
&\quad \mbox{(such that $\Lambda$ is a $1/2$-contraction)}.
\end{array}
$$
Therefore we conclude that $\Lambda$ has a unique fixed point in 
$\mathcal{D}_R$, which was the remaining part to prove the theorem. 
\end{proof}

\begin{remark}[A continuation criteria] \label{rem:timeextension}
The question arises on which interval $[0,T_{\mbox{\footnotesize max}})$ the
mean curvature flow with triple junction \eqref{MCF1}, \eqref{MCF2} can 
be extended. A careful revision of the above proof shows that $\delta$ in the
local existence interval
depends on the size of~$r$ (responsible for the validity of 
Assumptions (A1)-(A3)) and on $\varepsilon_0$. We note that for the validity 
of Assumptions (A1)-(A3) we need that the metric tensor is positive definite 
and in particular that the inverse exists. In Remark \ref{rem:localdiff}
we gave a formula for the metric tensor and one can see that if 
the second fundamental form of $\G^i_\ast$ and terms $\partial_l \tau^i_\ast$ 
are bounded, we can give a lower bound on the choice of $r$. 
If in addition we choose $\varepsilon_0$ small enough, this would lead to 
a lower bound on the existence interval $[0,\delta]$. In this way, 
we can achieve existence in any given time interval $[0,T]$ by splitting it 
into small ones and by choosing appropriate reference configurations on 
each interval, providing the $\partial_l \tau^i_\ast$ can be chosen bounded
for all reference configurations on the interval $[0,T]$. 

 We remark that the bound on $\partial_l \tau^i_\ast$ can be achieved 
in the following way. If we choose the vector $\tau^i_\ast$ as a truncation 
of the unit outer conormal with the help of geodesic lines,
we can do this in a strip around $\partial \G^i_\ast$ given by 
$q + r \nu^i_\ast(q)$, where $q \in \partial \G^i_\ast$ and
$0 \leq r \leq r_0$ for some positive $r_0$. Here we replace $r$ by 
a cut-off function evaluated at the geodesic distance from 
$\partial \G^i_\ast$. This gives a minimal bound on the diameter of 
the neighbourhood of the triple junction, where $\tau^i_\ast$ does not 
vanish and in this way we can also bound derivatives of the form 
$\partial_l \tau^i_\ast$. Possible scenarios for which this cannot be 
achieved are the following:
\begin{itemize}
 \item The area of one hypersurface converges to zero.
 \item The triple junction develops during the evolution a self contact. 
\end{itemize}

A similar continuation criterion in the case of curves has been studied
in Mantegazza, Novaga and 
Tortorelli~\cite{MNT}, where the authors consider evolution of planar 
networks according to curvature flow and conclude existence as long as one 
of the length of the curves tends to zero or a curvature integral blows up 
at a certain minimal rate. 
\end{remark}

\begin{remark}[Cluster with boundary contact] \label{rem:fixedboundary}
 We remark that it is also possible to consider a configuration where 
the three hypersurfaces lie inside a fixed bounded region 
$\Omega \subset \mathbb{R}^{n+1}$ and meet its boundary at 
a given contact angle, see for example Bronsard and Reitich~\cite{BR} or 
Garcke, Kohsaka and \v{S}ev\v{c}ovi\v{c}~\cite{GKS} for curves in the plane, 
and Depner~\cite{Dep10} or Depner and Garcke~\cite{DG11} for 
arbitrary dimensions. A natural contact angle achieved by the minimization 
of the weighted area would be 90 degree. If one uses the parametrization 
of \cite{Dep10} or \cite{DG11} to describe the geometric problem as 
a system of partial differential equations and the ideas from \cite{BR}, 
\cite{GKS} or from this work, one could derive a local existence result 
also in this situation. 
\end{remark}



\end{document}